\documentclass[11pt,reqno]{amsart}

\usepackage{amsmath,amssymb,amsthm}
\usepackage[margin=1.25in]{geometry}
\usepackage[hidelinks]{hyperref}

\theoremstyle{plain}
\newtheorem{theorem}{Theorem}[section]
\newtheorem{lemma}[theorem]{Lemma}
\newtheorem{proposition}[theorem]{Proposition}
\newtheorem{corollary}[theorem]{Corollary}
\newtheorem{conjecture}[theorem]{Conjecture}

\theoremstyle{definition}
\newtheorem{definition}[theorem]{Definition}
\newtheorem{example}[theorem]{Example}

\theoremstyle{remark}
\newtheorem{remark}[theorem]{Remark}

\newcommand{\R}{\mathbb{R}}
\newcommand{\Z}{\mathbb{Z}}
\newcommand{\T}{\mathbb{T}}
\newcommand{\eps}{\varepsilon}
\newcommand{\abs}[1]{\left\lvert #1\right\rvert}
\newcommand{\tnorm}[1]{\left\lVert #1\right\rVert_{\T}}
\newcommand{\fhat}[1]{\widehat{#1}}
\newcommand{\dimHlog}{\dim_{\mathrm H}^{\log}}
\newcommand{\dimPlog}{\dim_{\mathrm P}^{\log}}
\newcommand{\overdimBlog}{\overline{\dim}_{\mathrm B}^{\log}}
\DeclareMathOperator{\supp}{supp}
\DeclareMathOperator{\dist}{dist}
\DeclareMathOperator{\sgn}{sgn}

\begin{document}

\raggedbottom

\title[Erd\H{o}s similarity and Rajchman measures]
{The Erd\H{o}s similarity conjecture and Rajchman measures}

\author[Iosevich]{A. Iosevich}
\address{Department of Mathematics, University of Rochester, Rochester, NY, USA}
\email{iosevich@gmail.com}

\author[Kulkarni]{N. Kulkarni}
\address{Department of Mathematics, University of Rochester, Rochester, NY, USA}
\email{nkulkar7@math.rochester.edu}

\author[Mora Cuéllar]{N. Mora Cuéllar}
\address{Department of Mathematics, University of British Columbia, Vancouver, BC, Canada}
\email{natalia.mora@math.ubc.ca}

\author[Rojas Aravena]{I. Rojas Aravena}
\address{Department of Mathematics, University of British Columbia, Vancouver, BC, Canada}
\email{i.andres@math.ubc.ca}

\author[Yavicoli]{A. Yavicoli}
\address{Department of Mathematics, University of British Columbia, Vancouver, BC, Canada}
\email{yavicoli@math.ubc.ca}

\thanks{A.~I. was supported in part by National Science Foundation grant
DMS-2154232. A.~Y. was supported in part by the Natural Sciences and Engineering Research
Council of Canada (NSERC), grants GR030571 and GR030540.}

\date{August 2026}

\subjclass[2020]{Primary 42A38, 28A78; Secondary 28A80, 42A63}
\keywords{Rajchman measures, Erd\H{o}s similarity conjecture, affine copies,
periodic avoiding sets, Hausdorff gauges, logarithmic dimensions}

\begin{abstract}
Let $A\subseteq\R$ support a probability measure whose Fourier--Stieltjes transform tends
to zero at infinity. We prove that, for every $\eps\in(0,1)$, there is a closed,
$1$-periodic, nowhere dense set $E\subseteq\R$ such that
\[
m(E\cap I)\ge1-\eps
\]
for every interval $I$ of length $1$, while $E$ contains no affine copy of $A$. Thus every
set supporting a Rajchman measure satisfies the Erd\H{o}s similarity conjecture in a
uniform large-set form. The proof combines equidistribution modulo one for large dilates
of the measure with a multiscale family of low-density periodic blockers; no quantitative
rate of Fourier decay is used. We also refine a classical theorem of Iva\v{s}ev-Musatov,
showing that for
every Hausdorff gauge $h$ there is an $h$-null compact Rajchman support $K$ satisfying
\[
\overdimBlog K=\dimPlog K=1.
\]
The value $1$ is sharp for both dimensions.
\end{abstract}

\maketitle

\tableofcontents

\section{Introduction}\label{sec:intro}

Throughout, $m$ denotes Lebesgue measure on $\R$. An \emph{affine copy} of a set
$A\subseteq\R$ is a set
\[
x+sA=\{x+sa:a\in A\},
\qquad\text{where }x\in\R\text{ and }s\in\R\setminus\{0\}.
\]

\begin{definition}\label{def:universal}
A set $A\subseteq\R$ is \emph{universal} if every Lebesgue measurable set
$E\subseteq\R$ with $m(E)>0$ contains an affine copy of $A$.

A Lebesgue measurable set $E\subseteq\R$ is \emph{conull} if
$m(\R\setminus E)=0$. A set $A\subseteq\R$ is \emph{full-measure universal} if every
conull Lebesgue measurable set contains an affine copy of $A$.
\end{definition}

Every universal set is full-measure universal. Thus failure of full-measure universality is
a stronger conclusion than ordinary non-universality.

Every finite set is universal by the Lebesgue density theorem, and Erd\H{o}s asked whether
these are the only universal subsets of $\R$ \cite{Erdos}. Classical partial results for
slowly decaying sequences are due independently to Falconer and Eigen
\cite{Falconer,Eigen}, while Bourgain proved non-universality for triple sumsets of
infinite sets \cite{Bourgain}. Subsequent avoidance results and large- or full-measure
variants include work of Kolountzakis \cite{Kolountzakis,KolountzakisCantor}, Cruz, Lai, and
Pramanik \cite{CruzLaiPramanik}, Kolountzakis and Papageorgiou
\cite{KolountzakisPapageorgiou}, Gao, Mooroogen, and Yip \cite{GaoMooroogenYip}, and
Shmerkin and Yavicoli \cite{ShmerkinYavicoli}. A bi-Lipschitz variant was studied by
Feng, Lai, and Xiong \cite{FengLaiXiong}. For broader context, see the surveys
\cite{Svetic,JLM}.

For related recent results on thin lattice examples and structured two-fold sumsets,
obtained by additive mechanisms different from the one used here, see
\cite{IosevichYavicoliFalconer,MoraEtAlSumsets}.

\begin{conjecture}[Erd\H{o}s similarity conjecture]\label{conj:erdos}
No infinite subset of $\R$ is universal.
\end{conjecture}

We shall use a quantitative notion of largeness that has appeared in recent work on the
Erd\H{o}s similarity problem in the large
\cite{BradfordKohutMooroogen,GaoMooroogenYip}.

\begin{definition}\label{def:large}
Let $\eps\in(0,1)$. A Lebesgue measurable set $E\subseteq\R$ is
\emph{$(1-\eps)$-large} if
\[
m(E\cap I)\ge1-\eps
\]
for every interval $I\subseteq\R$ of length $1$.
\end{definition}

The condition is uniform over all unit intervals. In particular, it is substantially stronger
than asking only that $E$ have positive measure.

Our hypothesis is expressed in terms of a classical Fourier-analytic object.

\begin{definition}\label{def:rajchman}
For a finite Borel measure $\mu$ on $\R$, its Fourier--Stieltjes transform is
\[
\fhat\mu(\xi)=\int_{\R}e^{-2\pi i\xi x}\,d\mu(x),
\qquad\text{for all }\xi\in\R.
\]
A probability measure $\mu$ is a \emph{Rajchman measure} if
\[
\fhat\mu(\xi)\xrightarrow[\abs{\xi}\to\infty]{}0.
\]
We say that $A\subseteq\R$ \emph{supports} a Rajchman measure if there is a Rajchman
probability measure $\mu$ with $\supp\mu\subseteq A$.
\end{definition}

Our main theorem gives a uniform large-set version of non-universality.
\begin{theorem}\label{thm:main}
Let $A\subseteq\R$ support a Rajchman measure. For every $\eps\in(0,1)$ there exists a
closed, $1$-periodic, nowhere dense set $E\subseteq\R$ such that
\[
m(E\cap I)\ge1-\eps
\]
for every interval $I\subseteq\R$ of length $1$, and
\[
x+sA\not\subseteq E
\qquad
\text{for every }x\in\R\text{ and every }s\in\R\setminus\{0\}.
\]
In particular, $A$ is non-universal, so the Erd\H{o}s similarity conjecture holds for $A$.
\end{theorem}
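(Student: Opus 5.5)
We will construct $E=\pi^{-1}(F)$, where $\pi:\R\to\T=\R/\Z$ is the quotient map and $F\subseteq\T$ is a closed nowhere dense set with $m(F)\ge1-\eps$. Then $E$ is closed, $1$-periodic and nowhere dense, and every unit interval meets $E$ in measure exactly $m(F)$. So the whole problem reduces to building $F$ with the property that for every $x\in\R$ and $s\neq0$, some $a\in A$ has $\pi(x+sa)\notin F$. Let $\mu$ be the Rajchman measure with $\supp\mu\subseteq A$. It then suffices to show that for every $(x,s)$ the pushforward $\nu_{x,s}=\pi_*\bigl((y\mapsto x+sy)_*\mu\bigr)$ gives positive mass to the open set $U=\T\setminus F$. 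Indeed, $\nu_{x,s}(U)>0$ forces some $a\in\supp\mu\subseteq A$ with $\pi(x+sa)\in U$.

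\textbf{Large dilates.} The Fourier coefficients of $\nu_{x,s}$ are $\fhat{\nu_{x,s}}(k)=e^{-2\pi ikx}\fhat\mu(ks)$. By the Rajchman property these tend to $0$ uniformly in $x$ and in $0\ne|k|\le K$ as $|s|\to\infty$. By Weyl's criterion, $\nu_{x,s}$ therefore converges weakly to Lebesgue measure on $\T$ as $|s|\to\infty$, uniformly in $x$. To exploit this, fix an open arc $J$ with $\overline{J}$ inside the eventual $U$ and choose a continuous $0\le\phi\le\mathbf 1_J$ with $\int\phi>0$. Approximate $\phi$ by a trigonometric polynomial; then $\nu_{x,s}(J)\ge\int\phi\,d\nu_{x,s}>0$ for all $|s|\ge S_0$ and all $x$. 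So one open arc of small length handles every $(x,s)$ with $|s|\ge S_0$. No rate of decay is needed, only the existence of $S_0$.

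\textbf{Small and moderate dilates.} The real difficulty is $|s|<S_0$, including $s\to0$, where the copy $x+sA$ lies in a short interval. I will use a multiscale family of blockers. Let $D=\operatorname{diam}(\supp\mu)$, or work with a compact piece of positive $\mu$-measure so that $D<\infty$. Pick scales $S_0>S_1>S_2>\dots\to0$. Set $U=\bigcup_j U_j$, where $U_j$ is a finite union of arcs of total length $\le\eps2^{-j-1}$ and gaps smaller than $S_{j+1}\delta$, for a small $\delta$ depending on $\mu$. Then for $S_{j+1}\le|s|\le S_j$, the image $x+s\supp\mu$ spans length at least roughly $S_{j+1}$ times the spread of $\mu$, so it must pass through an arc of $U_j$.

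To turn this into positive $\nu_{x,s}$-mass, rather than just meeting the convex hull, I will use compactness in the parameters $(x \bmod 1,s)$ over the compact range $S_{j+1}\le|s|\le S_j$. The needed input is that for each fixed $(x,s)$ the measure $\nu_{x,s}$ is not supported on a nowhere dense set of... which is false in general; I address this in the next paragraph. The set of $(x,s)$ with $\nu_{x,s}(V)>0$ is open for open $V$, by lower semicontinuity of the mass of open sets under weak convergence. Hence finitely many arcs suffice on each compact parameter range.

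\textbf{Main obstacle.} The main obstacle is covering every $(x,s)$ with a fixed $s$-range using arcs of small total length, because $\nu_{x,s}$ may be singular. My approach is to choose the arcs randomly or by translation: for a fixed arc $J$ of length $\ell$, averaging $\nu_{x,s}(J+t)$ over $t\in\T$ gives $\ell$. For the $j$-th range, I will take $U_j$ to be a union of $N$ evenly spaced translates of a tiny arc, so that it is $1/N$-periodic. Its $\nu_{x,s}$-mass equals the $\nu$-mass of a corresponding arc under the $N$-fold dilate $y\mapsto Ny$. Now choose $N$ with $N|s|\ge S_0$ throughout the range. The large-dilate step then applies to the measure $\mu$ dilated by $Ns$, giving positive mass uniformly, while the total length stays $\ell$ independent of $N$. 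Choosing $\ell_j=\eps2^{-j-1}$ and taking $F=\T\setminus\bigcup_j U_j$ gives $m(F)\ge1-\eps$. Each $U_j$ is open, and adding a dense countable set of tiny arcs of total length below $\eps/2$ makes $F$ nowhere dense. So every affine copy, including the reflected ones with $s<0$ (handled via $|k|$ symmetry), meets $U$ and hence is not contained in $E$.
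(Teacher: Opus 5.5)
Your final construction is correct and is essentially the paper's argument. The $1/N$-periodic sets $\{u\in\T:Nu\in J_j\}$ have total length $\ell_j=\eps2^{-j-1}$, with the integer $N$ chosen so that $N\abs{s}$ exceeds an equidistribution threshold on the $j$-th range of scales. These are exactly the blockers $\mathcal B(q_k,\eta_k)$ of Proposition~\ref{prop:multiscale}. The only real variation is that you ask for positive $\nu_{x,s}$-mass on an arc rather than $\eta$-density of $\tau K\bmod1$; this is uniform in $x$ because $\abs{\fhat{\nu_{x,s}}(k)}=\abs{\fhat\mu(ks)}$. That lets you skip the compact restriction and the contradiction argument in Lemma~\ref{lem:equi}.

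A few things to tidy up. Make explicit that the threshold must be the one for arcs of length $\ell_j$, so it depends on $j$. Rescale the $\ell_j$ so that the total removed length, including the extra dense arcs, stays below $\eps$; those extra arcs are in fact unnecessary, since $N_j\to\infty$ already makes $\bigcup_jU_j$ dense. Finally, delete the middle paragraph on convex hulls and compactness in $(x,s)$, which is an abandoned detour.
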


Intersecting the set in Theorem~\ref{thm:main} with one unit interval gives the following
compact consequence.

\begin{corollary}\label{cor:compact-avoiding}
Let $A\subseteq\R$ support a Rajchman measure. For every $\eps\in(0,1)$ there exists a
compact set $E\subseteq[0,1]$ such that
\[
m(E)\ge1-\eps
\]
and
\[
x+sA\not\subseteq E
\qquad
\text{for every }x\in\R\text{ and every }s\in\R\setminus\{0\}.
\]
\end{corollary}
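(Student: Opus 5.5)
The statement to prove is Corollary~\ref{cor:compact-avoiding}, and I will deduce it directly from Theorem~\ref{thm:main}. Fix $A$ supporting a Rajchman measure and fix $\eps\in(0,1)$. Theorem~\ref{thm:main} gives a closed, $1$-periodic set $E_0\subseteq\R$ with
\[
m(E_0\cap I)\ge 1-\eps
\]
for every interval $I$ of length $1$, such that $x+sA\not\subseteq E_0$ for all $x\in\R$ and $s\ne0$. I set $E=E_0\cap[0,1]$ and check the three required properties.

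\emph{Compactness.} $E$ is the intersection of a closed set with a compact interval, so it is compact, and clearly $E\subseteq[0,1]$.

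\emph{Measure.} Applying the large-set property to the unit interval $I=[0,1]$ gives $m(E)=m(E_0\cap[0,1])\ge1-\eps$.

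\emph{Avoidance.} Suppose, for contradiction, that $x+sA\subseteq E$ for some $x\in\R$ and $s\neq0$. Since $E\subseteq E_0$, this gives $x+sA\subseteq E_0$. That contradicts the avoidance property of $E_0$ in Theorem~\ref{thm:main}.

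No step here is a genuine obstacle. The only point to note is that avoidance passes to subsets, because containing an affine copy is monotone in $E$. All the real work lies in Theorem~\ref{thm:main}.
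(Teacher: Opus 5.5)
Your proof is correct and is exactly the paper's argument: intersect the closed periodic set $E_0$ from Theorem~\ref{thm:main} with $[0,1]$, read off $m(E)\ge1-\eps$ from the unit-interval bound, and note that avoidance of affine copies passes to subsets.
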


Only qualitative Fourier decay is used in Theorem~\ref{thm:main}; no modulus of decay is
assumed. This distinction matters. Power Fourier decay forces positive Hausdorff dimension,
whereas compact Rajchman supports may have Hausdorff dimension zero and may be much
thinner still.

Rajchman measures have a long history in harmonic analysis; see \cite{Lyons}. On the
circle, a closed set carrying a nonzero Rajchman measure is traditionally called an
$M_0$-set, while a set annihilated by every Rajchman measure is called a $U_0$-set. For
compact supports, after an affine normalization, this circle formulation is equivalent to
the real-line formulation used here; see Remark~\ref{rem:circle-line}. Two classical facts
are especially relevant. First, the class of Rajchman measures is a band: absolute
continuity with respect to a Rajchman measure preserves Fourier decay. This was proved by
Rajchman and Milicer-Gru\.zewska and is recalled in Lemma~\ref{lem:band}. Second,
Iva\v{s}ev-Musatov proved that, for every Hausdorff gauge $h$, there is a closed set of
zero $h$-Hausdorff measure which is not a $U_0$-set \cite{IvashevMusatov}; see also
\cite{Lyons}. K\"orner obtained far-reaching refinements of this theorem
\cite{KornerIvashev}. Thus the existence of arbitrarily thin Rajchman supports in the sense
of Hausdorff gauges is classical.

The contribution of our thin-set result is the simultaneous sharp control of logarithmic box and
packing size. We write
\[
\dimHlog,\qquad \dimPlog,\qquad \overdimBlog
\]
for logarithmic Hausdorff, packing, and upper box dimensions; the definitions are given in
Subsection~\ref{subsec:log-dimensions}. We prove in Theorem~\ref{thm:thin-support} that,
for every Hausdorff gauge $h$, there is a compact set $K\subseteq(0,1)$ and a Rajchman
probability measure $\mu$ such that
\[
\supp\mu=K,
\qquad
\mathcal H^h(K)=0,
\qquad
\overdimBlog K=\dimPlog K=1.
\]
The value $1$ is the smallest possible value of either of these two logarithmic dimensions for a
compact Rajchman support. Choosing a gauge which dominates every logarithmic power gives, in
particular,
\[
\dimHlog K=0,
\qquad
\dimPlog K=1.
\]
Bluhm's construction of a zero-Hausdorff-dimensional Rajchman support
\cite{Bluhm} is an important earlier example in this direction.

These endpoint examples satisfy neither dimensional hypothesis in a recent theorem of
Shmerkin and Yavicoli \cite{ShmerkinYavicoli}. In the one-dimensional affine case, their
result says that if a Borel set $A$ satisfies either
\[
\dimHlog A>1
\qquad\text{or}\qquad
\dimPlog A>2,
\]
then there is a conull measurable set containing no affine copy of $A$; see
Theorem~\ref{thm:shmerkin-yavicoli}. Their conclusion is stronger in measure whenever
their hypotheses apply. Theorem~\ref{thm:main}, on the other hand, reaches compact sets at
the sharp logarithmic packing endpoint and produces an avoiding set which is closed,
periodic, nowhere dense, and uniformly large in every unit interval.

We now describe the main ideas. The band property first allows us to restrict any
Rajchman measure to a compact set of positive mass. If $K$ is the resulting compact
support and $\tau>0$, push the measure forward under
\[
x\longmapsto \tau x\bmod1.
\]
Its $n$-th Fourier coefficient is $\fhat\mu(n\tau)$. Therefore these pushforwards converge
weak-$*$ to normalized Lebesgue measure on $\T=\R/\Z$ as $\tau$ tends to infinity. A
compactness argument then shows that, for every $\eta>0$, the set
$\tau K\bmod1$ is $\eta$-dense for every $\tau$ beyond one fixed threshold.

For an integer $q\ge1$, the periodic blocker
\[
\mathcal B(q,\eta)=\{u\in\R:\tnorm{qu}<\eta\}
\]
has measure $2\eta$ in every unit interval. If $\abs{s}q$ is beyond the density threshold,
then $\mathcal B(q,\eta)$ meets every translate $x+sK$. We choose widths $\eta_k$ with
summable total measure and frequencies $q_k$ so that the $k$-th blocker detects every
scale $\abs{s}\ge2^{-k}$. The complement of their union is closed and $1$-periodic, is
$(1-\eps)$-large, and contains no affine copy of $K$. Since the frequencies tend to
infinity, the union of the blockers is dense, and the complement is nowhere dense.

Section~\ref{sec:equi} recalls the band property and proves uniform density of Rajchman
dilates. Section~\ref{sec:blockers} gives the periodic blocker construction and proves
Theorem~\ref{thm:main}. Section~\ref{sec:examples} records examples and necessary
topological structure. Section~\ref{sec:log-size} establishes the sharp logarithmic lower
bounds and constructs endpoint supports of arbitrary Hausdorff thinness.

\section{Restrictions and equidistribution of Rajchman measures}\label{sec:equi}

\subsection{The band property}

We begin with the classical band property of Rajchman measures. It is due to Rajchman and
Milicer-Gru\.zewska; see \cite[Section~2]{Lyons}. We include the proof because the
restriction statement will be used repeatedly.

\begin{lemma}[Band property]\label{lem:band}
Let $\mu$ be a finite positive Borel measure on $\R$ such that
\[
\fhat\mu(\xi)\xrightarrow[\abs\xi\to\infty]{}0.
\]
For $g\in L^1(\mu)$, let $g\mu$ denote the finite complex Borel measure defined by
\[
(g\mu)(B)=\int_B g\,d\mu
\]
for every Borel set $B\subseteq\R$. Then
$\fhat{g\mu}(\xi)\xrightarrow[\abs\xi\to\infty]{}0$. In particular, if $F$ is Borel and
$\mu(F)>0$, then the normalized restriction
\[
\frac{\mu|_F}{\mu(F)}
\]
is a Rajchman probability measure.
\end{lemma}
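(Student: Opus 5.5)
The plan is the classical two-step argument: first handle trigonometric polynomial densities by a direct computation, then pass to arbitrary $g\in L^1(\mu)$ by density and a uniform error bound.

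\emph{Step 1: exponentials and trigonometric polynomials.} For $g(x)=e^{2\pi i\lambda x}$ with $\lambda\in\R$,
\[
\fhat{g\mu}(\xi)=\int e^{-2\pi i(\xi-\lambda)x}\,d\mu(x)=\fhat\mu(\xi-\lambda),
\]
and this tends to $0$ as $\abs\xi\to\infty$ because $\abs{\xi-\lambda}\to\infty$. By linearity the same holds for every finite linear combination $p(x)=\sum_j c_je^{2\pi i\lambda_jx}$.

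\emph{Step 2: density of trigonometric polynomials in $L^1(\mu)$.} This is the step that needs care, since $\mu$ lives on $\R$ and need not be compactly supported. I would first approximate $g$ in $L^1(\mu)$ by a continuous compactly supported function $\varphi$; this is possible because $\mu$ is a finite Radon measure on $\R$. Choose $L$ so that $\supp\varphi\subseteq(-L,L)$ and $\mu(\R\setminus[-L,L])$ is small. By Weierstrass, approximate $\varphi$ uniformly on $[-L,L]$ by a $2L$-periodic trigonometric polynomial $p$, taking the periodic extension of $\varphi|_{[-L,L]}$, which is continuous since $\varphi$ vanishes near $\pm L$. Off $[-L,L]$ we have $\varphi=0$ and $\abs p\le\sup_{[-L,L]}\abs p\le\|\varphi\|_\infty+1$, so
\[
\|\varphi-p\|_{L^1(\mu)}\le\mu(\R)\sup_{[-L,L]}\abs{\varphi-p}+(\|\varphi\|_\infty+1)\,\mu(\R\setminus[-L,L]).
\]
Both terms can be made small: fix $\varphi$ first, then choose $L$, then $p$. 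Hence trigonometric polynomials are dense in $L^1(\mu)$.

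\emph{Step 3: conclusion.} For every $\xi$,
\[
\abs{\fhat{g\mu}(\xi)-\fhat{p\mu}(\xi)}\le\|g-p\|_{L^1(\mu)}.
\]
Therefore $\limsup_{\abs\xi\to\infty}\abs{\fhat{g\mu}(\xi)}\le\|g-p\|_{L^1(\mu)}$ for every trigonometric polynomial $p$. By Step 2 the right-hand side can be made arbitrarily small, so $\fhat{g\mu}(\xi)\to0$.

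\emph{The restriction statement.} Take $g=\mathbf 1_F/\mu(F)$. Then $g\mu$ is a positive measure with total mass $1$, so it is a probability measure, and by the above its Fourier transform vanishes at infinity; that is, it is a Rajchman probability measure.
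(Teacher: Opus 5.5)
Your proof is correct. Step 1, Step 3 and the restriction statement match the paper's argument. The only genuine difference is how you prove that the span of the characters $x\mapsto e^{2\pi i t x}$ is dense in $L^1(\mu)$.

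The paper argues non-constructively. If the span were not dense, Hahn--Banach and the duality $(L^1(\mu))^*=L^\infty(\mu)$ would give a nonzero $h\in L^\infty(\mu)$ with $\fhat{h\mu}\equiv0$. This contradicts the uniqueness theorem for Fourier--Stieltjes transforms of finite measures.

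You instead build the approximant explicitly, in three moves:
\begin{enumerate}
\item You use density of $C_c(\R)$ in $L^1(\mu)$, which follows from regularity of finite Borel measures on $\R$.
\item You approximate the $2L$-periodization of $\varphi$ by a trigonometric polynomial, via Weierstrass or Fej\'er.
\item You control the tail by noting that a $2L$-periodic polynomial is bounded on all of $\R$ by its supremum on $[-L,L]$. So the region off $[-L,L]$ costs at most $(\lVert\varphi\rVert_\infty+1)\,\mu(\R\setminus[-L,L])$.
\end{enumerate}
Your order of choices makes this work: first $\varphi$, then $L$, then $p$ with $\sup_{[-L,L]}\abs{\varphi-p}\le1$.

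What each route buys: the paper's proof is shorter, but it treats the uniqueness theorem and $L^1$--$L^\infty$ duality as black boxes. Yours is elementary and self-contained. It also makes visible how non-compact support is handled, by truncation with a tail estimate. It is the real-line counterpart of the Stone--Weierstrass argument the paper sketches in Remark~\ref{rem:circle-line} for measures carried by a short interval.
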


\begin{proof}
We first show that the linear span of the characters $x\mapsto e^{2\pi itx}$,
$t\in\R$, is dense in $L^1(\mu)$. If the span were not dense, the Hahn--Banach theorem
and the identification $(L^1(\mu))^*=L^\infty(\mu)$ would give a nonzero
$h\in L^\infty(\mu)$ such that
\[
\int_{\R}h(x)e^{2\pi itx}\,d\mu(x)=0
\qquad\text{for every }t\in\R.
\]
The integral is $\fhat{h\mu}(-t)$, so the finite complex measure $h\mu$ would have
identically zero Fourier--Stieltjes transform. Uniqueness of Fourier transforms of finite
measures would imply $h\mu=0$, and hence $h=0$ $\mu$-almost everywhere, a
contradiction.

Now fix $g\in L^1(\mu)$ and $\delta>0$. Choose
\[
P(x)=\sum_{j=1}^{J}c_je^{2\pi it_jx}
\]
such that $\lVert g-P\rVert_{L^1(\mu)}<\delta$. Then
\[
\fhat{P\mu}(\xi)
=\sum_{j=1}^{J}c_j\fhat\mu(\xi-t_j)
\xrightarrow[\abs\xi\to\infty]{}0.
\]
Moreover,
\[
\abs{\fhat{g\mu}(\xi)-\fhat{P\mu}(\xi)}
\le\lVert g-P\rVert_{L^1(\mu)}<\delta
\qquad\text{for every }\xi\in\R.
\]
It follows that
\[
\limsup_{\abs\xi\to\infty}\abs{\fhat{g\mu}(\xi)}\le\delta.
\]
Since $\delta>0$ was arbitrary, $g\mu$ is Rajchman. The final assertion follows by
taking $g=\mathbf 1_F/\mu(F)$.
\end{proof}

\begin{remark}\label{rem:circle-line}
For compact supports, the usual circle formulation and the real-line formulation of the
Rajchman property are equivalent after an affine normalization. To explain the only
nontrivial direction, suppose that $\mu$ is supported in an interval of length less than
$1$ and that $\fhat\mu(n)$ tends to zero as $\abs n$ tends to infinity through the
integers. On this interval the integer characters separate points. The same proof as in
Lemma~\ref{lem:band}, now using integer characters and the Stone--Weierstrass theorem,
shows that $\fhat{g\mu}(n)$ tends to zero for every $g\in L^1(\mu)$. Write
$\xi=n+t$, where $n\in\Z$ and $t\in[0,1)$. The functions
\[
g_t(x)=e^{-2\pi itx},\qquad 0\le t\le1,
\]
form a compact subset of $L^1(\mu)$. A finite-net argument therefore makes the decay of
\[
\fhat{g_t\mu}(n)=\fhat\mu(n+t)
\]
uniform in $t$. Hence $\fhat\mu(\xi)$ tends to zero as $\abs\xi$ tends to infinity on the
whole real line.

Conversely, real-line decay plainly implies decay along the integer frequencies. If a
Rajchman measure on $\T$ is not already carried by an arc of length less than $1$, one
first restricts it to such an arc of positive mass and uses the circle version of the band
property. This is the precise way in which the classical $M_0$-set results quoted in the
introduction pass to our real-line setting.
\end{remark}

\subsection{Uniform density modulo one}

Write $\T=\R/\Z$, and let $\lambda$ be normalized Lebesgue measure on $\T$, obtained
by pushing forward Lebesgue measure on $[0,1)$. For $t\in\R$, put
\[
\tnorm{t}=\dist(t,\Z).
\]
This induces the quotient metric
\[
d_{\T}(x+\Z,y+\Z)=\tnorm{x-y}
\qquad\text{for all }x,y\in\R.
\]
We identify $\T$ with $[0,1)$ when convenient.
A nonempty set $S\subseteq\T$ is \emph{$\eta$-dense} if for every $y\in\T$ there exists
$z\in S$ such that $d_{\T}(y,z)\le\eta$.

Let $\mu$ be a compactly supported Rajchman probability measure on $\R$, and put
$K=\supp\mu\subseteq\R$. For $\tau>0$, define
\[
\rho_\tau:\R\longrightarrow\T,
\qquad
\rho_\tau(x)=\tau x\bmod1,
\]
and let
\[
\nu_\tau=(\rho_\tau)_*\mu
\]
be the pushforward of $\mu$. Thus, for every Borel set $B\subseteq\T$,
\[
\nu_\tau(B)
=\mu\bigl(\rho_\tau^{-1}(B)\bigr)
=\mu\bigl(\{x\in\R:\tau x\bmod1\in B\}\bigr),
\]
and, equivalently, for every $f\in C(\T)$,
\[
\int_{\T}f(y)\,d\nu_\tau(y)
=\int_{\R}f(\tau x\bmod1)\,d\mu(x).
\]
Moreover,
\[
\supp\nu_\tau=\rho_\tau(K)=\tau K\bmod1.
\]
Indeed, continuity of $\rho_\tau$ and compactness of $K$ show that $\rho_\tau(K)$ is
compact and hence closed. Since $\mu(\R\setminus K)=0$, the measure $\nu_\tau$ is
concentrated on $\rho_\tau(K)$. Conversely, if
$x\in K$ and $U\subseteq\T$ is an open neighborhood of $\rho_\tau(x)$, then
$\rho_\tau^{-1}(U)$ is an open neighborhood of $x$. Since $x\in\supp\mu$,
$\mu(\rho_\tau^{-1}(U))>0$, and therefore $\rho_\tau(x)\in\supp\nu_\tau$.

\begin{lemma}[Uniform density of dilates]\label{lem:equi}
For every $\eta>0$ there is a constant $\Lambda(\eta)\ge1$ such that
\[
\tau K\bmod1\quad\text{is $\eta$-dense in $\T$}
\]
for every real $\tau\ge\Lambda(\eta)$.
\end{lemma}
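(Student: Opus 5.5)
The plan is to show that $\nu_\tau$ converges weak-$*$ to $\lambda$ as $\tau\to\infty$, and then to use a fixed test function to force the support $\tau K\bmod1$ to meet every arc of radius $\eta$.

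\emph{Step 1: Fourier coefficients.} For $n\in\Z$,
\[
\fhat{\nu_\tau}(n)=\int_{\R}e^{-2\pi i n\tau x}\,d\mu(x)=\fhat\mu(n\tau).
\]
For each fixed $n\neq0$ we have $\abs{n\tau}\to\infty$ as $\tau\to\infty$, so the Rajchman property gives $\fhat{\nu_\tau}(n)\to0=\fhat\lambda(n)$. Also $\fhat{\nu_\tau}(0)=1$.

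\emph{Step 2: Weak-$*$ convergence.} Trigonometric polynomials are uniformly dense in $C(\T)$, and all $\nu_\tau$ are probability measures. A standard $\delta/3$ argument then shows that $\int f\,d\nu_\tau\to\int f\,d\lambda$ for every $f\in C(\T)$.

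\emph{Step 3: Uniform density.} Rather than argue by contradiction over a moving center $y$, I would avoid the uniformity issue with a finite net. Fix $\eta>0$ and choose points $y_1,\dots,y_N\in\T$ forming an $\eta/2$-net. For each $j$, let $f_j\in C(\T)$ be a continuous bump with $0\le f_j\le1$, supported in the open arc of radius $\eta/2$ about $y_j$, and with $\int f_j\,d\lambda>0$.

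By Step 2 there is $\Lambda_j$ such that $\int f_j\,d\nu_\tau>0$ for all $\tau\ge\Lambda_j$. Since $\nu_\tau$ is concentrated on its support $\tau K\bmod1$ (shown just before the lemma), $\int f_j\,d\nu_\tau>0$ forces $\tau K\bmod1$ to meet the arc of radius $\eta/2$ about $y_j$.

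Set $\Lambda(\eta)=\max(1,\Lambda_1,\dots,\Lambda_N)$. Given any $y\in\T$, pick $j$ with $d_\T(y,y_j)\le\eta/2$. For $\tau\ge\Lambda(\eta)$ there is then $z\in\tau K\bmod1$ with $d_\T(z,y_j)<\eta/2$, and hence $d_\T(y,z)\le\eta$.

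\emph{Main obstacle.} The only delicate point is making the threshold uniform over all $y$, together with the passage from the real parameter $\tau$ to the weak-$*$ limit. Step 1 handles the second issue, since each coefficient depends only on $\fhat\mu$ along $n\tau$, which tends to infinity for every real $\tau$. The finite net in Step 3 handles the first. Compactness of $K$ is used only to identify $\supp\nu_\tau$ with $\tau K\bmod1$.
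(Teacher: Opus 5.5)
Your proof is correct and follows the paper's route: the Fourier coefficients $\fhat\mu(n\tau)\to0$ give weak-$*$ convergence of $\nu_\tau$ to $\lambda$ via density of trigonometric polynomials, and then a bump function with positive $\lambda$-integral forces $\tau K\bmod1$ into every small arc. The only difference is the compactness step for the uniform threshold: you take a finite $\eta/2$-net and the maximum of finitely many thresholds, whereas the paper argues by contradiction with $\tau_j\to\infty$ and a convergent subsequence of bad centers $y_j$, so both arguments use the compactness of $\T$ in the same way.
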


\begin{proof}
We first prove that
\[
\nu_\tau\xrightarrow[\tau\to\infty]{\text{weak-$*$}}\lambda.
\]
For $n\in\Z$, the $n$-th Fourier coefficient of $\nu_\tau$ is
\[
\fhat{\nu_\tau}(n)
=\int_{\T}e^{-2\pi iny}\,d\nu_\tau(y)
=\int_{\R}e^{-2\pi in\tau x}\,d\mu(x)
=\fhat\mu(n\tau).
\]
Here reduction modulo one does not affect the exponential because $n$ is an integer.
For $n=0$, the displayed quantity equals $1$. If $n\ne0$, then $\abs{n\tau}$ tends to
infinity with $\tau$, and the Rajchman property gives
\[
\fhat{\nu_\tau}(n)=\fhat\mu(n\tau)
\xrightarrow[\tau\to\infty]{}0.
\]
These are precisely the Fourier coefficients of $\lambda$. For a trigonometric
polynomial $P(y)=\sum_{\abs{n}\le N}c_ne^{2\pi iny}$, integration is a finite linear
combination of Fourier coefficients, and hence
\[
\int_{\T}P(y)\,d\nu_\tau(y)
=\sum_{\abs{n}\le N}c_n\fhat{\nu_\tau}(-n)
\xrightarrow[\tau\to\infty]{}c_0
=\int_{\T}P(y)\,d\lambda(y).
\]
By Fej\'er's theorem \cite[Theorem~I.3.1(b), p.~18]{Katznelson}, the Ces\`aro means of
the Fourier series of every $f\in C(\T)$ converge uniformly to $f$. Since each such mean
is a trigonometric polynomial, given $f\in C(\T)$ and $\delta>0$, choose a trigonometric
polynomial $P$ satisfying $\lVert f-P\rVert_\infty<\delta$. Since $\nu_\tau$ and
$\lambda$ are probability measures,
\[
\begin{aligned}
\left|
\int_{\T}f(y)\,d\nu_\tau(y)
-\int_{\T}f(y)\,d\lambda(y)
\right|
&\le
\int_{\T}\abs{f(y)-P(y)}\,d\nu_\tau(y)\\
&\quad+
\left|
\int_{\T}P(y)\,d\nu_\tau(y)
-\int_{\T}P(y)\,d\lambda(y)
\right|\\
&\quad+
\int_{\T}\abs{P(y)-f(y)}\,d\lambda(y)\\
&\le
2\delta+
\left|
\int_{\T}P(y)\,d\nu_\tau(y)
-\int_{\T}P(y)\,d\lambda(y)
\right|.
\end{aligned}
\]
Letting first $\tau$ tend to infinity and then $\delta$ tend to zero proves the asserted
weak-$*$ convergence.

We now convert weak-$*$ convergence into the asserted density statement, with one
threshold valid for every sufficiently large $\tau$. Suppose to the contrary that the
conclusion fails for some $\eta>0$. Then, for every $j\ge1$, there are
$\tau_j\ge j$ and $y_j\in\T$ such that
\[
d_{\T}(y_j,z)>\eta
\qquad
\text{for every }z\in\tau_jK\bmod1.
\]
After passing to a subsequence, compactness of $\T$ gives a limit $y\in\T$ such that
$y_j$ tends to $y$ as $j$ tends to infinity.
For all sufficiently large $j$, $d_{\T}(y_j,y)<\eta/2$. The open ball
$B_{\T}(y,\eta/2)$ is then disjoint from
$\tau_jK\bmod1=\supp\nu_{\tau_j}$. Choose a nonzero $f\in C(\T)$ such that
$f\ge0$ and
\[
\supp f\subseteq B_{\T}(y,\eta/2).
\]
It follows that
\[
\int_{\T}f\,d\nu_{\tau_j}=0
\qquad\text{for all sufficiently large }j,
\]
whereas $\int_{\T}f\,d\lambda>0$. This contradicts the weak-$*$ convergence just proved,
since $\tau_j$ tends to infinity with $j$. Therefore a finite
$\Lambda(\eta)\ge1$ exists and works for every $\tau\ge\Lambda(\eta)$.
\end{proof}

\section{Periodic blockers and large avoiding sets}\label{sec:blockers}

For an integer $q\ge1$ and $0<\eta<\tfrac12$, define the open periodic blocker
\[
\mathcal B(q,\eta)
=\left\{u\in\R:\tnorm{qu}<\eta\right\}.
\]
Let $T_q:[0,1)\longrightarrow\T$ be given by $T_q(u)=qu\bmod1$. Then
\[
(T_q)_*\bigl(m|_{[0,1)}\bigr)=\lambda.
\] 
Indeed, partition $[0,1)$ into the intervals $[j/q,(j+1)/q)$, $0\le j<q$.
For every Borel set $F\subseteq\T$, the part of $T_q^{-1}(F)$ in each of these $q$
intervals has measure $\lambda(F)/q$, because the corresponding branch of $T_q$ is
affine with slope $q$. Hence
\[
m\bigl(T_q^{-1}(F)\bigr)=\lambda(F).
\]
Taking $F=\{t\in\T:\tnorm{t}<\eta\}$ gives
\begin{equation}\label{eq:blocker-measure}
m\bigl(\mathcal B(q,\eta)\cap[0,1]\bigr)=2\eta.
\end{equation}
Since $\mathcal B(q,\eta)$ is $1$-periodic, the same identity holds with $[0,1]$
replaced by any interval of length $1$.

The next lemma explains how density modulo one forces an affine copy to meet a blocker.

\begin{lemma}[Single-scale blocking]\label{lem:block}
Let $K\subseteq\R$ be nonempty, let $q\ge1$ be an integer, let
$0<\eta<\tfrac12$, and let $s\in\R\setminus\{0\}$. If
\[
\abs{s}qK\bmod1
\quad\text{is $\eta/2$-dense in $\T$,}
\]
then
\[
(x+sK)\cap\mathcal B(q,\eta)\ne\emptyset
\qquad
\text{for every }x\in\R.
\]
\end{lemma}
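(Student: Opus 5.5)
We need a point $k\in K$ with $\tnorm{q(x+sk)}<\eta$. The idea is to write this condition as a distance on $\T$ between $\abs{s}qk \bmod 1$ and a target point determined by $x$ and the sign of $s$, and then apply the $\eta/2$-density hypothesis to that target.

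First suppose $s>0$. Then $q(x+sk)=qx+\abs{s}qk$, so
\[
\tnorm{q(x+sk)}=d_{\T}\bigl(\abs{s}qk\bmod1,\,-qx\bmod1\bigr).
\]
Set $y=-qx\bmod1\in\T$. By hypothesis there is $z\in\abs{s}qK\bmod1$ with $d_{\T}(y,z)\le\eta/2$. Write $z=\abs{s}qk\bmod1$ for some $k\in K$. Then
\[
\tnorm{q(x+sk)}\le\eta/2<\eta,
\]
so $x+sk\in\mathcal B(q,\eta)$.

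Now suppose $s<0$. Then $q(x+sk)=qx-\abs{s}qk$, and since $\tnorm{t}=\tnorm{-t}$,
\[
\tnorm{q(x+sk)}=\tnorm{\abs{s}qk-qx}=d_{\T}\bigl(\abs{s}qk\bmod1,\,qx\bmod1\bigr).
\]
We apply the density hypothesis with target $y=qx\bmod1$ and conclude exactly as before.

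The only point needing care is the sign of $s$. The density hypothesis is stated for $\abs{s}$, so the negative case must be reduced to it through the symmetry of $\tnorm{\cdot}$ under $t\mapsto -t$. The slack between $\eta/2$ and the strict inequality $<\eta$ is more than enough; it is presumably built in for convenience later in the paper.
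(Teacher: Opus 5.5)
Your proof is correct and is essentially the paper's argument. The paper handles both signs at once by setting $\sigma=\sgn(s)$ and applying the density hypothesis to the target $-\sigma qx\bmod1$, which is exactly your choice of $-qx$ for $s>0$ and $qx$ for $s<0$; the symmetry $\tnorm{t}=\tnorm{-t}$ plays the same role in both proofs.
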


\begin{proof}
Fix \(x\in\R\), and let \(\sigma=\sgn(s)\). By the assumed
\(\eta/2\)-density, applied to the target
\(-\sigma qx\bmod 1\), there exists \(k\in K\) such that
\[
\bigl\||s|qk+\sigma qx\bigr\|_{\mathbb T}
\le \frac{\eta}{2}.
\]
Since \(s=\sigma|s|\) and multiplication by \(\sigma\in\{-1,1\}\)
does not change distance to \(\Z\),
\[
\begin{aligned}
\bigl\|q(x+sk)\bigr\|_{\mathbb T}
&=\bigl\|qx+\sigma|s|qk\bigr\|_{\mathbb T}\\
&=\bigl\|\sigma qx+|s|qk\bigr\|_{\mathbb T}\\
&\le \frac{\eta}{2}<\eta.
\end{aligned}
\]
Thus \(x+sk\in\mathcal B(q,\eta)\), and hence
\[
(x+sK)\cap\mathcal B(q,\eta)\ne\emptyset .
\]
\end{proof}

We now carry out the multiscale construction. The proposition is stated in a form which
separates the geometric blocking argument from the Fourier-analytic input.

\begin{proposition}[Multiscale blocking]\label{prop:multiscale}
Let $K\subseteq\R$ be a nonempty compact set with the following property: for every
$\delta>0$ there is a finite $\Lambda(\delta)\ge1$ such that
\[
\tau K\bmod1\quad\text{is $\delta$-dense in $\T$}
\]
for every $\tau\ge\Lambda(\delta)$. Then for every $\eps\in(0,1)$ there exists a
closed, $1$-periodic, nowhere dense set $E\subseteq\R$ such that
\[
m(E\cap I)\ge1-\eps
\]
for every interval $I\subseteq\R$ of length $1$, and
\[
x+sK\not\subseteq E
\qquad
\text{for every }x\in\R\text{ and every }s\in\R\setminus\{0\}.
\]
\end{proposition}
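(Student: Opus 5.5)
We take $E$ to be the complement of a countable union of periodic blockers $\mathcal B(q_k,\eta_k)$, $k\ge1$. Widths and frequencies are chosen so that the total measure per unit interval is at most $\eps$, and so that the $k$-th blocker catches every affine copy $x+sK$ with $\abs{s}\ge2^{-k}$.

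\textbf{Choice of parameters.} Fix $\eps\in(0,1)$ and put
\[
\eta_k=\min\{\eps\,2^{-k-2},\,\tfrac14\}
\qquad(k\ge1).
\]
Then $\sum_k 2\eta_k\le\eps/2<\eps$. By the hypothesis with $\delta=\eta_k/2$, the threshold $\Lambda(\eta_k/2)$ is finite. Choose an integer
\[
q_k\ge\max\bigl\{2^k\Lambda(\eta_k/2),\,k\bigr\}.
\]
Set
\[
U=\bigcup_{k\ge1}\mathcal B(q_k,\eta_k),
\qquad E=\R\setminus U.
\]

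\textbf{Closed, periodic and large.} Each blocker is open and $1$-periodic, so $E$ is closed and $1$-periodic. For any interval $I$ of length $1$, countable subadditivity and \eqref{eq:blocker-measure} (valid on every unit interval by periodicity) give
\[
m(I\cap U)\le\sum_k 2\eta_k<\eps .
\]
Hence $m(E\cap I)\ge1-\eps$.

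\textbf{Avoidance.} Let $x\in\R$ and $s\ne0$. Choose $k$ with $2^{-k}\le\abs{s}$. Then
\[
\abs{s}q_k\ge2^{-k}q_k\ge\Lambda(\eta_k/2),
\]
so $\abs{s}q_kK\bmod1$ is $\eta_k/2$-dense. Lemma~\ref{lem:block} then gives a point of $x+sK$ inside $\mathcal B(q_k,\eta_k)\subseteq U$. Therefore $x+sK\not\subseteq E$.

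\textbf{Nowhere density.} This is the only step needing care, since $E$ has positive measure and so cannot be shown small by measure alone. Because $E$ is closed, it suffices to show that $U$ is dense. Take any open interval $J$ of length $\ell>0$. Since $q_k\ge k\to\infty$, pick $k$ with $1/q_k<\ell$. The set $\mathcal B(q_k,\eta_k)$ contains every point $j/q_k$ with $j\in\Z$, and consecutive such points are $1/q_k<\ell$ apart. Hence one of them lies in $J$, so $U\cap J\ne\emptyset$. Thus $U$ is dense and $E$ has empty interior.

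The construction is straightforward once the thresholds $\Lambda(\cdot)$ are available; the real content sits in the hypothesis, which Lemma~\ref{lem:equi} supplies.
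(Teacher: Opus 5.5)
Your proof is correct and follows essentially the same construction as the paper: geometric widths $\eta_k$, frequencies $q_k\ge2^k\Lambda(\eta_k/2)$ so that the $k$-th blocker catches all scales $\abs{s}\ge2^{-k}$ via Lemma~\ref{lem:block}, and nowhere density from the lattice $q_k^{-1}\Z\subseteq\mathcal B(q_k,\eta_k)$. Your extra safeguards ($\min\{\cdot,\tfrac14\}$, the factor $2^{-k-2}$, and $q_k\ge k$) are harmless but redundant, since $\eps<1$ already gives $\eta_k<\tfrac12$ and $\Lambda\ge1$ already gives $q_k\ge2^k$.
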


\begin{proof}
Fix $\eps\in(0,1)$. For $k\ge1$, set
\[
\eta_k=\eps 2^{-k-1}.
\]
Choose an integer $q_k$ satisfying
\[
q_k\ge2^k\Lambda(\eta_k/2),
\]
and put
\[
\mathcal B_k=\mathcal B(q_k,\eta_k).
\]
Define
\[
E=\R\setminus\bigcup_{k=1}^{\infty}\mathcal B_k.
\]
Every $\mathcal B_k$ is open and $1$-periodic. Therefore $E$ is closed and
$1$-periodic.

Let $I$ be an interval of length $1$. By \eqref{eq:blocker-measure}, periodicity, and
countable subadditivity,
\[
\begin{aligned}
m(I\setminus E)
&=m\left(I\cap\bigcup_{k=1}^{\infty}\mathcal B_k\right)\\
&\le\sum_{k=1}^{\infty}m(\mathcal B_k\cap I)\\
&=\sum_{k=1}^{\infty}2\eta_k
=\eps.
\end{aligned}
\]
Thus $m(E\cap I)\ge1-\eps$.

Fix $x\in\R$ and $s\in\R\setminus\{0\}$. Choose $k$ so large that
\[
2^{-k}\le\abs{s}.
\]
Then
\[
\abs{s}q_k
\ge2^{-k}q_k
\ge\Lambda(\eta_k/2).
\]
The hypothesis on $K$ shows that $\abs{s}q_kK\bmod1$ is
$\eta_k/2$-dense. Lemma~\ref{lem:block} gives
\[
(x+sK)\cap\mathcal B_k\ne\emptyset.
\]
Since $E\cap\mathcal B_k=\emptyset$, it follows that $x+sK\not\subseteq E$.

It remains only to verify that $E$ is nowhere dense. Since $q_k\ge2^k$, the sequence
$q_k$ tends to infinity. Let $J\subseteq\R$ be a nonempty open interval. For all
sufficiently large $k$, the lattice $q_k^{-1}\Z$ meets $J$. Every point of this lattice
belongs to $\mathcal B_k$, so $J$ meets $\bigcup_k\mathcal B_k$. The union of the
blockers is therefore open and dense. Its complement $E$ is closed and nowhere dense.
\end{proof}

\begin{remark}\label{rem:density-beyond-rajchman}
The hypothesis of Proposition~\ref{prop:multiscale} is geometric, and it is strictly
weaker than the condition that $K$ supports a Rajchman measure. To see this, let
$(M_n)_{n\ge1}$ be any sequence of positive integers tending to infinity, and set
\[
K
=
\{0\}
\cup
\bigcup_{n=1}^{\infty}
\left\{
\frac{j}{2^nM_n}:0\le j\le M_n
\right\}.
\]
The set $K$ is compact: every point in its $n$th finite block lies in $[0,2^{-n}]$, so the
only possible accumulation point not already in a fixed block is $0$. It is also countable,
and hence it cannot support a Rajchman probability measure by
Proposition~\ref{prop:diffuse}.

Nevertheless, $K$ satisfies the hypothesis of Proposition~\ref{prop:multiscale}. Given
$\tau\ge2$, choose $n$ so that
\[
2^n\le\tau<2^{n+1},
\]
and put $t=\tau/2^n\in[1,2)$. For every $y\in[0,1]$, choose an integer
$j\in\{0,\ldots,M_n\}$ nearest to $M_ny/t$. Then
\[
\left|y-\frac{tj}{M_n}\right|
\le\frac{t}{2M_n}
\le\frac1{M_n}.
\]
Since
\[
\frac{tj}{M_n}
=
\tau\frac{j}{2^nM_n}
\in\tau K,
\]
and
\[
d_{\T}\left(y\bmod1,\frac{tj}{M_n}\bmod1\right)
\le \left|y-\frac{tj}{M_n}\right|.
\]
It follows that $\tau K\bmod1$ is $M_n^{-1}$-dense in $\T$. As $\tau$ tends to
infinity, so does $n$, and therefore $M_n^{-1}$ tends to zero. More explicitly, given
$\delta>0$, choose $N$ so that $M_n^{-1}\le\delta$ for every $n\ge N$; then
$\Lambda(\delta)=2^N$ works in Proposition~\ref{prop:multiscale}.

This example also shows that irrational differences are not necessary for the blocking
argument, since every element of $K-K$ is rational. Conversely, even a very large
difference set does not imply the required density property. The middle-thirds Cantor set
$C$ satisfies
\[
C-C=[-1,1],
\]
as follows, for example, from balanced ternary expansions. On the other hand,
\[
3^nC\bmod1\subseteq C\bmod1
\]
for every $n$, and these dilates omit a fixed neighborhood of $1/2$; see
Example~\ref{ex:cantor-no-rajchman}. Thus the static size of $K-K$ does not capture the
geometric property used in Proposition~\ref{prop:multiscale}. Rajchman decay is one
natural mechanism which guarantees that property, but it is not the only one.
\end{remark}

\begin{proof}[Proof of Theorem~\ref{thm:main}]
Choose a Rajchman probability measure $\mu$ such that
\[
\supp\mu\subseteq A.
\]
Since $\mu(\supp\mu)=1$, inner regularity gives a compact set
\[
F\subseteq\supp\mu
\qquad\text{with}\qquad
\mu(F)>0.
\]
By Lemma~\ref{lem:band}, the normalized restriction
\[
\nu=\frac{\mu|_F}{\mu(F)}
\]
is a Rajchman probability measure. Put
\[
K=\supp\nu.
\]
Then $K$ is a nonempty compact subset of $F$, and hence $K\subseteq A$.
Lemma~\ref{lem:equi} shows that $K$ satisfies the hypothesis of
Proposition~\ref{prop:multiscale}. We obtain a closed, $1$-periodic, nowhere dense set
$E\subseteq\R$ which is $(1-\eps)$-large and contains no affine copy of $K$.

If $E$ contained an affine copy $x+sA$, then it would contain the corresponding affine
copy $x+sK$, since $K\subseteq A$. This contradiction completes the proof.
\end{proof}

\begin{proof}[Proof of Corollary~\ref{cor:compact-avoiding}]
Let $E_0\subseteq\R$ be the set supplied by Theorem~\ref{thm:main}, and put
\[
E=E_0\cap[0,1].
\]
The set $E$ is compact, and the local measure estimate gives $m(E)\ge1-\eps$.
Since $E\subseteq E_0$, it contains no affine copy of $A$.
\end{proof}

\section{Examples and basic structure of Rajchman supports}\label{sec:examples}

\subsection{Necessary topological structure}

\begin{proposition}\label{prop:diffuse}
Every Rajchman probability measure on $\R$ is atomless. Consequently its support is
uncountable and has no isolated points. Thus its support is a nonempty perfect closed set.
\end{proposition}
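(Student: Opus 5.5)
The plan is to prove atomlessness first, using either Wiener's lemma or the band property (Lemma~\ref{lem:band}); the topological consequences then follow routinely. Let $\mu$ be a Rajchman probability measure and suppose $\mu(\{a\})=c>0$ for some $a\in\R$.

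The most direct route is the band property. Take $F=\{a\}$ in Lemma~\ref{lem:band}. The normalized restriction $\mu|_F/\mu(F)$ is the Dirac mass $\delta_a$, and the lemma says it must be Rajchman. But
\[
\abs{\fhat{\delta_a}(\xi)}=\abs{e^{-2\pi i\xi a}}=1
\qquad\text{for every }\xi\in\R,
\]
so $\fhat{\delta_a}$ does not tend to zero. This contradiction shows that $\mu$ has no atoms.

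As a cross-check, I would also note Wiener's identity
\[
\lim_{T\to\infty}\frac1{2T}\int_{-T}^{T}\abs{\fhat\mu(\xi)}^2\,d\xi=\sum_{x}\mu(\{x\})^2 .
\]
The left side vanishes when $\fhat\mu\to0$, which again forces every atom to be zero. The band-property argument is shorter and uses only results already in the paper, so that is the one I would write.

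For the consequences, let $S=\supp\mu$. It is closed by definition of support, and nonempty because $\mu(\R\setminus S)=0$ and $\mu(\R)=1$.

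\emph{Uncountability.} If $S$ were countable, countable additivity together with atomlessness would give $\mu(S)=\sum_{x\in S}\mu(\{x\})=0$, contradicting $\mu(S)=1$.

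\emph{No isolated points.} Suppose $x\in S$ is isolated in $S$. Then there is an open interval $U\ni x$ with $U\cap S=\{x\}$. Since $x$ lies in the support, $\mu(U)>0$. On the other hand, $\mu(U)=\mu(U\cap S)=\mu(\{x\})=0$. This contradiction shows $S$ has no isolated points.

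A nonempty closed set with no isolated points is perfect, which completes the argument. I expect no real obstacle here; the only point needing care is making sure the band property is invoked with a Borel set of positive measure, and $F=\{a\}$ qualifies since $\mu(\{a\})=c>0$.
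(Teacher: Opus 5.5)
Your proof is correct, but your key step differs from the paper's.

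The paper proves atomlessness with the continuous form of Wiener's lemma. By Fubini, $\frac{1}{2T}\int_{-T}^{T}\abs{\fhat\mu(\xi)}^2\,d\xi$ is the integral of the kernel $D_T(x-y)$ against $\mu\times\mu$. Dominated convergence identifies the limit as $T\to\infty$ with $\sum_a\mu(\{a\})^2$. The Rajchman property then forces that limit to vanish. This is exactly the identity you set aside as a cross-check.

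You instead apply Lemma~\ref{lem:band} with $F=\{a\}$, so the normalized restriction is $\delta_a$, whose transform has modulus one everywhere. This is legitimate and not circular. The band lemma is proved in Section~\ref{sec:equi} from Hahn--Banach duality and uniqueness of Fourier--Stieltjes transforms, without any appeal to atomlessness.

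Your route is shorter once the band property is available, and it makes the structural point transparent. If $\mu$ had an atom at $a$, then $\delta_a\ll\mu$, so $\delta_a$ would have to lie in the Rajchman band, which it plainly does not.

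The paper's route avoids the duality and uniqueness machinery, using only Fubini and dominated convergence. It is also quantitative: atomlessness already follows from the much weaker hypothesis that $\abs{\fhat\mu}^2$ has vanishing Ces\`aro means. In fact that condition characterizes continuous measures exactly.

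Your handling of the topological consequences matches the paper's.
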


\begin{proof}
We include the continuous form of Wiener's argument \cite{Wiener}. For $T>0$, Fubini's
theorem gives
\[
\frac{1}{2T}\int_{-T}^{T}\abs{\fhat\mu(\xi)}^2\,d\xi
=\iint_{\R^2}D_T(x-y)\,d\mu(x)\,d\mu(y),
\]
where
\[
D_T(u)
=\frac{1}{2T}\int_{-T}^{T}e^{-2\pi i\xi u}\,d\xi
=
\begin{cases}
\dfrac{\sin(2\pi Tu)}{2\pi Tu},&u\ne0,\\[5pt]
1,&u=0.
\end{cases}
\]
For every $u\in\R$, $\abs{D_T(u)}\le1$, and $D_T(u)$ tends to
$\mathbf 1_{\{0\}}(u)$ as $T$ tends to infinity. Dominated convergence therefore yields
\[
\lim_{T\to\infty}\frac{1}{2T}\int_{-T}^{T}\abs{\fhat\mu(\xi)}^2\,d\xi
=(\mu\times\mu)\bigl(\{(x,y):x=y\}\bigr)
=\sum_{a\in\R}\mu(\{a\})^2.
\]
The last equality follows from Fubini applied to the sections of the diagonal; the set of
atoms of a finite measure is at most countable.

Since $\abs{\fhat\mu(\xi)}\le1$, the Rajchman property forces the expression on the left
to tend to zero. Indeed, given $\delta>0$, choose $R>0$ such that
$\abs{\fhat\mu(\xi)}^2<\delta$ for $\abs{\xi}>R$. For $T>R$,
\[
\frac{1}{2T}\int_{-T}^{T}\abs{\fhat\mu(\xi)}^2\,d\xi
\le\frac{R}{T}+\delta.
\]
It follows that $\mu(\{a\})=0$ for every $a\in\R$.

Put $K=\supp\mu$. Since $\mu(\R\setminus K)=0$, a countable $K$ would give
\[
1=\mu(K)=\sum_{x\in K}\mu(\{x\})=0,
\]
a contradiction. Hence $K$ is uncountable. If $x\in K$ were isolated in $K$, some open
interval $U$ containing $x$ would satisfy $U\cap K=\{x\}$. Then
$\mu(U)=\mu(\{x\})=0$, contrary to $x\in\supp\mu$. Thus $K$ has no isolated points.
It is nonempty and closed by the general properties of supports, completing the proof.
\end{proof}

In particular, every set covered by Theorem~\ref{thm:main} contains an uncountable perfect
subset. The converse is false, even for the most familiar perfect set. The following fact is
classical: Rajchman proved that every $H$-set is annihilated by every Rajchman measure,
and the middle-thirds Cantor set is an $H$-set \cite[Section~2]{Lyons}. We include a direct
verification because it displays the same obstruction to equidistribution that underlies our
main argument.

\begin{example}\label{ex:cantor-no-rajchman}
The middle-thirds Cantor set $C$ is a perfect uncountable compact set, but no Rajchman
probability measure has support contained in $C$.

\begin{proof}[Verification]
The compactness, uncountability, and perfectness of $C$ are standard. Let
$\pi:\R\to\T$ be the quotient map. Multiplication by $3^n$ deletes the first $n$
ternary digits modulo one: if
\[
x=\sum_{j\ge1}2\eps_j3^{-j}\in C,
\qquad
\eps_j\in\{0,1\}\text{ for every }j\ge1,
\]
then
\[
\pi(3^nx)
=\pi\left(\sum_{j>n}\frac{2\eps_j}{3^{j-n}}\right)
\in\pi(C).
\]
Consequently
\[
3^nC\bmod1\subseteq\pi(C)
\qquad\text{for every }n\ge1.
\]
The point $\pi(1/2)$ has distance at least $1/6$ from $\pi(C)$, because
\[
C\subseteq[0,1/3]\cup[2/3,1].
\]

Suppose now that a Rajchman probability measure $\mu$ had
$K=\supp\mu\subseteq C$. The compact set $K$ satisfies Lemma~\ref{lem:equi}. Applying
that lemma with $\eta=1/7$, we find that $3^nK\bmod1$ is $1/7$-dense in $\T$ for all
sufficiently large $n$. But
\[
3^nK\bmod1\subseteq3^nC\bmod1\subseteq\pi(C),
\]
and no subset of $\pi(C)$ is $1/7$-dense because it stays at distance at least $1/6$ from
$\pi(1/2)$. This contradiction proves the claim.
\end{proof}
\end{example}

\subsection{Standard sources of Rajchman measures}

We record several standard sources of Rajchman measures.

\begin{proposition}\label{prop:families}
Each of the following classes consists of sets supporting Rajchman probability measures.
\begin{enumerate}
\item Lebesgue measurable sets of positive Lebesgue measure;
\item compact sets of positive Fourier dimension, and hence compact Salem sets of positive
dimension;
\item the central Cantor sets $C_r$, $0<r<\tfrac12$, for which $r^{-1}$ is not a Pisot
number (a Pisot number is a real algebraic integer greater than $1$ all of whose other
algebraic conjugates have modulus less than $1$);
\item non-singleton self-similar sets generated by finitely many maps of the form
\[
f_j(x)=r_jx+b_j,
\qquad\text{where }0<r_j<1,
\]
for which
\[
\frac{\log r_j}{\log r_\ell}\notin\mathbb Q
\]
for at least one pair $j\ne\ell$.
\end{enumerate}
There also exist Cantor sets of Hausdorff dimension zero supporting Rajchman measures.
\end{proposition}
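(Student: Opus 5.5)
The plan is to treat the four classes one at a time. In each case we produce an explicit Rajchman probability measure $\mu$ with $\supp\mu$ contained in the given set. When the natural measure lives on a larger set, we restrict it to a compact piece of positive mass and invoke the band property, Lemma~\ref{lem:band}.

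\emph{Class (1).} Let $A$ be measurable with $m(A)>0$. Inner regularity gives a compact $F\subseteq A$ with $0<m(F)<\infty$. Put $\mu=\mathbf 1_F\,m/m(F)$. Its Fourier--Stieltjes transform is the Fourier transform of an $L^1$ function, so it tends to zero by the Riemann--Lebesgue lemma. Moreover $\supp\mu\subseteq F\subseteq A$, since $F$ is closed.

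\emph{Class (2).} Positive Fourier dimension means, by definition, that $K$ carries a probability measure with $\abs{\fhat\mu(\xi)}\le C\abs{\xi}^{-\sigma/2}$ for some $\sigma>0$. Such a measure is plainly Rajchman. A compact Salem set of positive dimension has Fourier dimension equal to its Hausdorff dimension, which is positive, so it falls under the same argument.

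\emph{Classes (3) and (4).} Here we do not reprove the deep results; we cite them.
\begin{enumerate}
\item For $C_r$, take the natural Cantor measure, the infinite Bernoulli convolution $\mu_r$. Its transform factors as
\[
\fhat{\mu_r}(\xi)=e^{-\pi i\xi}\prod_{j\ge0}\cos\bigl(\pi(1-r)r^j\xi\bigr).
\]
By the theorem of Salem (the Erd\H{o}s--Salem dichotomy, see \cite{Lyons}), this tends to zero exactly when $r^{-1}$ is not Pisot.
\item For self-similar sets with a pair of contraction ratios whose logarithms are rationally independent, we take any self-similar measure with positive weights. Its support is the whole attractor. Such a measure is Rajchman by the theorem of Li and Sahlsten on Fourier decay of self-similar measures; one should verify the precise hypotheses against that reference. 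The non-singleton assumption excludes the degenerate case of a Dirac mass, which is not Rajchman.
\end{enumerate}

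\emph{Final sentence.} Dimension-zero Cantor sets carrying Rajchman measures are supplied by Bluhm's construction \cite{Bluhm}, or equivalently by the Iva\v{s}ev-Musatov theorem \cite{IvashevMusatov}. We take a gauge $h$ with $h(t)\ge t^{\alpha}$ near $0$ for every $\alpha>0$. The set given by the theorem is $h$-null, hence $\mathcal H^\alpha$-null for every $\alpha>0$, so it has Hausdorff dimension zero. The set is closed and non-$U_0$, and Remark~\ref{rem:circle-line} transfers the Rajchman property to the line. By Proposition~\ref{prop:diffuse} the support is perfect, and being of dimension zero it is totally disconnected. So after restriction it is a Cantor set.

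The main difficulty is items (3) and (4). They rest on nontrivial number-theoretic Fourier decay results, which we cite rather than prove. The only care needed is to check that those theorems are stated for exactly the hypotheses listed, including the real-line versus circle convention.
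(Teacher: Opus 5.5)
Your proposal is correct and follows the paper's proof essentially item by item. For (1) you apply Riemann--Lebesgue to the normalized indicator of a compact subset of positive measure; for (2) you use the definition of Fourier dimension; for (3) you cite Salem's theorem for $C_r$; and for (4) you cite Li--Sahlsten for the irrational-ratio self-similar sets. The one deviation is optional: your Iva\v{s}ev-Musatov route to the dimension-zero Cantor set (a gauge dominating every $t^\alpha$, transfer to the line via Remark~\ref{rem:circle-line}, perfectness from Proposition~\ref{prop:diffuse}) is valid, whereas the paper simply cites Bluhm \cite{Bluhm}.
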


\begin{proof}
Let $A$ be Lebesgue measurable with $m(A)>0$. By restricting first to a bounded interval
and then using inner regularity, choose a compact set $F\subseteq A$ with
$0<m(F)<\infty$. The probability measure
\[
d\mu(x)=\frac{\mathbf 1_F(x)}{m(F)}\,dx
\]
has $\supp\mu\subseteq F\subseteq A$. Its density $\mathbf 1_F/m(F)$ belongs to
$L^1(\R)$, so the Riemann--Lebesgue lemma gives
\[
\fhat\mu(\xi)
=\frac{1}{m(F)}\int_F e^{-2\pi i\xi x}\,dx
\xrightarrow[\abs{\xi}\to\infty]{}0.
\]
Thus $\mu$ is Rajchman, proving (1).

If the compact set $A$ has positive Fourier dimension, then by definition there exist a
probability measure $\mu$ with $\supp\mu\subseteq A$ and constants $C>0$ and
$\alpha>0$ such that
\[
\abs{\fhat\mu(\xi)}
\le C(1+\abs{\xi})^{-\alpha}
\qquad\text{for all }\xi\in\R.
\]
Such a measure is Rajchman, proving (2).

For $0<r<\tfrac12$, let $C_r$ be the attractor of
\[
f_0(x)=rx,
\qquad
f_1(x)=rx+(1-r).
\]
Let $\mu_r$ be the equal-weight self-similar probability measure on $C_r$. Then
$\supp\mu_r=C_r$, and Salem's theorem shows that $\mu_r$ is Rajchman when $r^{-1}$ is
not a Pisot number; see
\cite[p.~344]{LiSahlsten} and \cite{Salem}. This proves (3).

For the systems in (4), choose any probability vector with all weights positive and let
$\mu$ be the associated self-similar measure. Its support is the entire attractor.
After an affine normalization of the attractor, Li and Sahlsten's theorem applies; they
proved, without a separation assumption, that every such self-similar measure is Rajchman
whenever two logarithmic contraction ratios have irrational quotient
\cite[Theorem~1.2]{LiSahlsten}. 

Finally, Bluhm constructed a Cantor set of Hausdorff dimension zero carrying a Rajchman
probability measure \cite{Bluhm}. This example shows that neither positive Lebesgue measure
nor positive Hausdorff or Fourier dimension is necessary.
\end{proof}

\section{Logarithmic size and thin Rajchman supports}\label{sec:log-size}

\subsection{Logarithmic dimensions}
\label{subsec:log-dimensions}

A \emph{Hausdorff gauge} is a nondecreasing function
$h:[0,\infty)\to[0,\infty)$ such that $h(0)=0$, $h(r)>0$ for $r>0$, and
$h(r)\xrightarrow[r\searrow0]{}0$. For $\delta>0$ and $A\subseteq\R$, define
\[
\mathcal H_\delta^h(A)
=\inf\left\{
\sum_{j\in J}h(\operatorname{diam}U_j):
A\subseteq\bigcup_{j\in J}U_j,
\quad
\operatorname{diam}U_j\le\delta
\right\},
\]
where the infimum is over finite or countable covers by nonempty sets
$U_j\subseteq\R$. The generalized Hausdorff measure associated with $h$ is
\[
\mathcal H^h(A)
=\lim_{\delta\searrow0}\mathcal H_\delta^h(A)
=\sup_{\delta>0}\mathcal H_\delta^h(A).
\]

For packing measure, we likewise evaluate $h$ at diameters. Write
$B(x,r)=[x-r,x+r]$. A
$\delta$-packing of a set $F\subseteq\R$ is a finite or countable pairwise disjoint
family $\{B(x_j,r_j)\}_j$, where $x_j\in F$ and $0<2r_j\le\delta$, and its $h$-cost is
$\sum_jh(2r_j)$. The supremum of these costs is $\mathcal P_\delta^h(F)$, and
$\mathcal P_0^h(F)=\lim_{\delta\searrow0}\mathcal P_\delta^h(F)$. The associated
packing measure is
\[
\mathcal P^h(F)
=\inf\left\{
\sum_{j=1}^{\infty}\mathcal P_0^h(F_j):
F\subseteq\bigcup_{j=1}^{\infty}F_j
\right\}.
\]
For $s>0$, let $h_s$ be any Hausdorff gauge satisfying
\[
h_s(r)=\bigl(\log(1/r)\bigr)^{-s}
\]
for all sufficiently small $r>0$. Define
\[
\begin{aligned}
\dimHlog A&=\inf\{s>0:\mathcal H^{h_s}(A)=0\},\\
\dimPlog A&=\inf\{s>0:\mathcal P^{h_s}(A)=0\}.
\end{aligned}
\]
Here $\inf\varnothing=\infty$. These are the logarithmic Hausdorff and packing dimensions
used in \cite{ShmerkinYavicoli}. Replacing $h(\operatorname{diam}U_j)$ by
$h(\operatorname{diam}U_j/2)$ in the Hausdorff construction, and $h(2r_j)$ by
$h(r_j)$ in the packing construction, gives the same logarithmic dimensions.

For a nonempty compact set $K\subseteq\R$, let $N(K,r)$ be the least number of closed
intervals of radius $r$ needed to cover $K$. Its upper logarithmic box dimension is
\[
\overdimBlog K
=\limsup_{r\searrow0}
\frac{\log N(K,r)}{\log\log(1/r)}.
\]

For comparison, we recall the one-dimensional affine case of
\cite[Lemma~1.1 and Theorem~1.3]{ShmerkinYavicoli}.

\begin{theorem}[Shmerkin--Yavicoli]
\label{thm:shmerkin-yavicoli}
Let $A\subseteq\R$ be Borel. If
\[
\dimHlog A>1
\qquad\text{or}\qquad
\dimPlog A>2,
\]
then there is a conull Lebesgue measurable set $E\subseteq\R$ containing no affine copy
of $A$.
\end{theorem}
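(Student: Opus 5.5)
The statement is quoted from \cite{ShmerkinYavicoli} and is not used in the proof of Theorem~\ref{thm:main}. If one wanted to reprove it, the plan would be a random construction made conull by Borel--Cantelli. The set $E$ would be the complement of a null set $G=\limsup_n G_n$. Each $G_n$ is a random union of intervals of length $r_n$, placed independently at density $p_n$ on a grid of mesh $r_n$, so that $\mathbb E\,m(G_n\cap[0,1])\approx p_n$. If $\sum_n p_n<\infty$, then $G$ is null almost surely and $E=\R\setminus G$ is conull. The scales would be taken super-exponentially small, say $\log(1/r_n)=2^{n}$.

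Two reductions come first. By translation, scaling, and countable stability of the conclusion over $s$ in dyadic ranges $\abs{s}\in[2^{-j},2^{j}]$ and $x$ in unit intervals, one may take $A$ compact and $(x,s)$ in a compact parameter box. Periodizing would handle all $x$ at once. For the Hausdorff hypothesis one would use Frostman's lemma for gauges: if $\mathcal H^{h_t}(A)>0$ with some $t>1$, there is a probability measure $\mu$ on a compact subset of $A$ with $\mu(B(y,r))\lesssim(\log(1/r))^{-t}$. For the packing hypothesis one would instead use a Joyce--Preiss type argument. It produces a compact subset $A'\subseteq A$ and a sparse sequence of scales at which $A'$ contains $\gtrsim(\log 1/r)^{t}$ points that are $r$-separated, with $t>2$.

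The core estimate is a single-scale hitting bound. Fix a copy $x+sA$ and let $N$ be the number of its $r_n$-separated points. The probability that the copy misses $G_n$ is at most about $(1-p_n)^{N}\le\exp(-p_nN)$. A union bound over an $r_n$-net of the $(x,s)$-box, which has about $r_n^{-2}$ elements, requires
\[
p_n\,N\gg\log(1/r_n).
\]
In the Hausdorff case this asks for more than mere hitting. Using $\mu$ and independence, concentration (Chernoff) shows that for every copy in the net the $\mu$-mass of $\{a:x+sa\in G_n\}$ is at least $c\,p_n$ times the expected proportion. Continuity in $(x,s)$ at scale $r_n$ passes the bound from the net to all parameters. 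To turn hitting into a point of $G=\limsup G_n$, I would arrange that for each fixed copy the sets $S_n=\{a\in A:x+sa\in G_n\}$ satisfy $\mu(S_n)\ge c>0$ for all large $n$. This can be done by taking $G_n$ to be a union over a block of scales, or by requiring that the hit sets at consecutive scales be nested in the sense of a Cantor-type selection. Then $\mu(\limsup S_n)\ge c>0$, so $\limsup S_n$ is nonempty and the copy meets $G$.

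Balancing the parameters gives the thresholds. With $N\approx(\log 1/r_n)^{t}$ and $\log(1/r_n)=2^n$, one can take $p_n\approx 2^{n(1-t)}\cdot n$, which is summable exactly when $t>1$. This matches the Hausdorff threshold. In the packing case the good scales occur only along a subsequence tied to $A'$, and the ``infinitely often'' requirement must hold simultaneously for all copies. Paying a second logarithmic factor for this, in effect requiring that hits occur at sufficiently many consecutive good scales, costs one more power and gives the threshold $t>2$.

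The main obstacle is the step from ``every copy meets $G_n$ for all large $n$'' to ``every copy meets $\limsup G_n$.'' Mere hitting at each scale does not give a common point, so the Frostman-measure proportion argument, or its packing analogue with only sparse good scales, is where the real work and the loss of one logarithmic power lie.
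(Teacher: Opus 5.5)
The paper does not prove this statement. It quotes it from \cite{ShmerkinYavicoli} (Lemma~1.1 and Theorem~1.3 there) purely for comparison, and no argument in the paper depends on it, so you are right that no proof is needed here. The reconstruction you sketch, however, is not a proof, and its central step fails as stated. In your construction every fixed point $y$ satisfies $\mathbb P(y\in G_n)\lesssim p_n$. Since $\sum_n p_n<\infty$, Borel--Cantelli gives $\mathbb P(y\in G)=0$ for every $y$. By Fubini, for each fixed $(x,s)$, $\mathbb E\,\mu(\limsup_n S_n)=\int\mathbb P(x+sa\in G)\,d\mu(a)=0$, so $\mu(\limsup_n S_n)=0$ almost surely. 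Because $\mu(\limsup_n S_n)\ge\limsup_n\mu(S_n)$ for a finite measure, this forces $\mu(S_n)\to0$. So the requirement $\mu(S_n)\ge c>0$ for all large $n$ cannot be met, even for one fixed copy. Your own concentration bound only gives $\mu(S_n)\gtrsim p_n\to0$. Grouping scales into blocks does not help, because the block densities must remain summable for $G$ to be null. Any point of $x+sA$ lying in $G$ comes from a $\mu$-null part of the copy, so a Frostman-proportion argument is the wrong mechanism for the $\limsup$ step.

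What is actually needed is the nested selection you mention only in passing. After a hit at scale $r_n$, the image of a whole portion $A\cap B(a,cr_n/\abs s)$ must lie in $G_n$, which requires enlarged intervals. Then every affine copy of every such portion must be hit at some later scale. The union bound therefore has to run over copies of portions of $A$, not copies of $A$. Likewise, the lower bound on the number of separated points must hold for each portion, not just for $A$. This is exactly where the hypotheses must enter: in the Hausdorff case, through the Frostman mass of the current portion. In the packing case you need a compact $A'\subseteq A$ \emph{every portion of which} has large upper logarithmic box dimension; your Joyce--Preiss step only gives good scales for $A'$ itself. Your parameter count covers only single-scale hitting of copies of the whole set. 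The claim that the packing case ``costs one more power'' is asserted rather than derived from any estimate.
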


\subsection{Lower bounds forced by Fourier decay}

The next elementary obstruction is useful when comparing sets supporting Rajchman
measures with logarithmic-dimension criteria.

\begin{proposition}\label{prop:logbox-lower}
If a compact set $K$ supports a Rajchman probability measure, then
$\overdimBlog K\ge1$.
\end{proposition}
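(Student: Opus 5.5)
The plan is to show directly the stronger bound $N(K,r)\gtrsim\log(1/r)$ for all small $r$. This gives $\liminf_{r\searrow0}\log N(K,r)/\log\log(1/r)\ge1$, and in particular $\overdimBlog K\ge1$. The input is Lemma~\ref{lem:equi}, applied to a Rajchman measure carried by $K$: the dilates $\tau K\bmod1$ are $\eta$-dense for every real $\tau\ge\Lambda(\eta)$. The idea is to play this against Dirichlet's simultaneous approximation theorem. If $K$ is covered by few short intervals, some large integer dilate sends all of their centres close to $0$ modulo one, so that dilate of $K$ is far from $1/2$.

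\emph{Step 1 (reduction).} Let $\mu$ be a Rajchman probability measure with $\supp\mu\subseteq K$, and put $K'=\supp\mu$. Since $N(K,r)\ge N(K',r)$, it suffices to treat $K'$. Lemma~\ref{lem:equi} applies to $K'$; fix $\eta=1/5$ and write $\Lambda=\Lambda(1/5)$. Fix $r>0$, let $N=N(K',r)$, and let $c_1,\dots,c_N$ be centres of intervals of radius $r$ covering $K'$.

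\emph{Step 2 (Dirichlet).} Take $Q=20$ and consider the $Q^N+1$ points
\[
\bigl(n\Lambda c_1\bmod1,\dots,n\Lambda c_N\bmod1\bigr)\in[0,1)^N,
\qquad 0\le n\le Q^N.
\]
Partition $[0,1)^N$ into $Q^N$ cubes of side $1/Q$. By pigeonhole, two of these points lie in the same cube, and the difference of their indices is some $n\in\{1,\dots,Q^N\}$ with $\tnorm{n\Lambda c_j}\le1/Q$ for every $j$. Put $\tau=n\Lambda$, so that $\Lambda\le\tau\le\Lambda Q^N$. Every $k\in K'$ satisfies $\abs{k-c_j}\le r$ for some $j$, and hence
\[
\tnorm{\tau k}\le\frac1{20}+\Lambda\,20^N r .
\]

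\emph{Step 3 (contradiction).} Suppose $\Lambda\,20^N r\le 1/20$. Then $\tau K'\bmod1$ lies within $1/10$ of $0$, so it stays at distance at least $2/5>1/5$ from $1/2$. Since $\tau\ge\Lambda$, this contradicts the $1/5$-density from Lemma~\ref{lem:equi}. Therefore $20^{N}>1/(400\Lambda r)$, that is,
\[
N(K,r)\ge N(K',r)>\frac{\log\bigl(1/(400\Lambda r)\bigr)}{\log 20},
\]
which is of order $\log(1/r)$ as $r\searrow0$. Taking logarithms and dividing by $\log\log(1/r)$ gives the claim.

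The only delicate point, and the one to check first, is arranging that the Dirichlet dilate satisfies $\tau\ge\Lambda$, so that Lemma~\ref{lem:equi} applies. Running the pigeonhole over multiples of $\Lambda$ rather than over all integers handles this. The price is the factor $\Lambda$ in the final bound, and it is harmless because $\Lambda$ does not depend on $r$.
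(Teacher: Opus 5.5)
Your proof is correct. It shares the paper's core mechanism, simultaneous Dirichlet approximation of the cover centres, but organizes it differently.

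The paper argues by contradiction from $N(K,r)\le(\log(1/r))^\theta$ with $\theta<1$. It lets the precision grow as $Q=\lfloor\log(1/r)\rfloor$, so that $\sup_{x\in K}\tnorm{qx}\to0$ and hence $\fhat\mu(q)\to1$. It then needs a separate argument that the denominators $q(r)$ are unbounded: a bounded $q_0$ would give $q_0K\subseteq\Z$, contradicting Proposition~\ref{prop:diffuse}.

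You instead fix $Q=20$. This only confines $\tau K'\bmod1$ to the $1/10$-neighbourhood of $0$, and you compensate with the uniform threshold of Lemma~\ref{lem:equi}. Pigeonholing over multiples of $\Lambda$ makes $\tau\ge\Lambda$ automatic, so no unboundedness or atomlessness argument is required.

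Your route gives a quantitative bound at every small scale, $N(K,r)\ge\log\bigl(1/(20\Lambda r)\bigr)/\log20$. Your constant $400$ is valid, but it can be improved to $20$. Hence $\liminf_{r\searrow0}\log N(K,r)/\log\log(1/r)\ge1$, and $N(K,r_k)=o(\log(1/r_k))$ is ruled out along every sequence $r_k\searrow0$. This is more than the proposition asks.

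The paper's route uses only integer Fourier coefficients and bypasses the compactness argument inside Lemma~\ref{lem:equi}, but it yields only the qualitative dimensional statement. Your fixed-precision trick could also be run in Fourier language. Pigeonhole over multiples of an integer $q_0$ beyond which $\abs{\fhat\mu}\le1/2$, with a fixed $Q\ge26$; then $\abs{\fhat\mu(q)-1}<1/2$ gives the contradiction directly.
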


\begin{proof}
Suppose instead that $\overdimBlog K<1$, and choose
\[
\overdimBlog K<\theta<1.
\]
Put $L=\log(1/r)$, so that $r=e^{-L}$. For all sufficiently small $r$,
\[
M:=N(K,r)\le L^\theta.
\]
Choose centers $x_1,\ldots,x_M$ of a cover of $K$ by intervals of radius $r$, and let
$Q=\lfloor L\rfloor$. For small $r$ we have $Q\ge2$. We use the following simultaneous
form of Dirichlet's approximation lemma
\cite[Chapter~II, (1.1), p.~27]{Schmidt}: given $z_1,\ldots,z_M\in\R$ and an integer
$Q\ge2$, there is an integer $q$ with
$1\le q<Q^M$ and $\tnorm{qz_j}\le1/Q$ for every $j$ with $1\le j\le M$.
Applying the lemma to $x_1,\ldots,x_M$, choose $q$ such that
\begin{equation}\label{eq:q-dirichlet}
1\le q<Q^M,
\qquad
\tnorm{qx_j}\le\frac1Q
\quad\text{for every }j\text{ with }1\le j\le M.
\end{equation}
Let $q(r)$ be the maximal integer $q$ satisfying \eqref{eq:q-dirichlet}.

Given $x\in K$, choose $j$ with $\abs{x-x_j}\le r$. Then
\[
\tnorm{qx}
\le\tnorm{qx_j}+q\abs{x-x_j}
\le\frac1Q+Q^Mr.
\]
Since $M\le L^\theta$, $Q\le L$, and $r=e^{-L}$,
\[
Q^Mr
\le\exp\bigl(L^\theta\log L-L\bigr)
\xrightarrow[r\searrow0]{}0.
\]
Let $\mu$ be a Rajchman probability measure with $\supp\mu\subseteq K$. The elementary
inequality $\abs{e^{-2\pi it}-1}\le2\pi\tnorm{t}$ gives
\[
\abs{\fhat\mu(q)-1}
\le2\pi\sup_{x\in K}\tnorm{qx}
\xrightarrow[r\searrow0]{}0.
\]
Here and below $q=q(r)$. The values $q(r)$ are unbounded in every neighborhood of zero.
Indeed, otherwise they would be bounded for all sufficiently small $r$, so some fixed
$q_0\ge1$ would occur along a sequence tending to zero. The preceding estimate would
then force $\sup_{x\in K}\tnorm{q_0x}=0$, and hence
$q_0K\subseteq\Z$. Since $K$ is bounded, it would be finite, contradicting
Proposition~\ref{prop:diffuse}. We may therefore choose a sequence $(r_k)_k$ with
$r_k\searrow0$ such that $\bigl(q(r_k)\bigr)_k$ tends to infinity. Along the same
sequence, $\bigl(\fhat\mu(q(r_k))\bigr)_k$ tends to $1$, contradicting the Rajchman
property.
\end{proof}

\begin{corollary}\label{cor:logpacking-lower}
If a compact set $K$ supports a Rajchman probability measure, then
\[
\dimPlog K\ge1.
\]
\end{corollary}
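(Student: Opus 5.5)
We will derive Corollary~\ref{cor:logpacking-lower} from Proposition~\ref{prop:logbox-lower}. The difficulty is that the standard inequality $\dimPlog F\le\overdimBlog F$ runs in the wrong direction for a lower bound. We therefore use the usual Baire-category regularization for packing dimension, applied to a suitable restriction of the Rajchman measure.

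Suppose $\mu$ is a Rajchman probability measure with $\supp\mu\subseteq K$, and suppose for contradiction that $\dimPlog K<1$. Choose $s$ with $\dimPlog K<s<1$, so that $\mathcal P^{h_s}(K)=0$. By the definition of $\mathcal P^{h_s}$ as an infimum over countable covers, we can write $K\subseteq\bigcup_j F_j$ with $\mathcal P_0^{h_s}(F_j)<\infty$ for every $j$.

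Next we pass to closures and choose a piece carrying positive mass. Since $\mathcal P_0^h(\overline F)=\mathcal P_0^h(F)$ for gauges (every packing of $\overline F$ can be perturbed to a packing centered in $F$ with nearly the same radii, using continuity of $h_s$), we may assume each $F_j$ is compact, replacing it by $\overline{F_j}\cap K$. Countable subadditivity gives some $j$ with $\mu(F_j)>0$. By Lemma~\ref{lem:band}, $\nu=\mu|_{F_j}/\mu(F_j)$ is a Rajchman probability measure, and its support lies in the compact set $F_j$.

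The key step is the comparison of $\mathcal P_0^{h_s}$ with logarithmic box counting. If $\mathcal P_0^{h_s}(F)<\infty$, then for small $r$ a maximal $r$-separated subset of $F$ has cardinality at least comparable to $N(F,r)$. That subset yields a packing by balls of radius $r/2$ with cost $\gtrsim N(F,r)\,(\log(1/r))^{-s}$. Since this cost is bounded by $\mathcal P_\delta^{h_s}(F)$, we get $N(F,r)\lesssim(\log(1/r))^{s}$. Consequently $\overdimBlog F_j\le s<1$.

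Applying Proposition~\ref{prop:logbox-lower} to $F_j$, which supports the Rajchman measure $\nu$, gives $\overdimBlog F_j\ge1$, a contradiction. The main thing to verify carefully is the packing-to-box estimate. Specifically, one must check that the factor-of-two changes between radius and diameter only alter constants, since $\log(1/(2r))\sim\log(1/r)$. The closure step is routine.
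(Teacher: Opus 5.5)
Your proposal is correct and follows essentially the paper's route: from $\mathcal P^{h_s}(K)=0$ you extract a countable cover by sets of finite $h_s$-packing premeasure, pass to compact pieces inside $K$, bound $\overdimBlog$ of each piece by $s$ via a maximal disjoint family of balls, restrict $\mu$ to a piece of positive mass using Lemma~\ref{lem:band}, and contradict Proposition~\ref{prop:logbox-lower}. The only difference is minor: the paper does not need your closure-invariance claim for $\mathcal P_0^{h_s}$, because it centers the maximal packing at points of $A_j\cap K$ and notes that the doubled closed balls already cover the closure. Separately, take your separation strictly greater than $r$, so that the closed balls of radius $r/2$ are genuinely disjoint.
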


\begin{proof}
Suppose that $\dimPlog K<1$. By the definition of logarithmic packing
dimension, there is a $t<1$ such that
\[
\mathcal P^{h_t}(K)=0.
\]
Choose $s$ with $t<s<1$. For all sufficiently small $r$, we have
$h_s(r)\le h_t(r)$, and hence monotonicity of packing measure gives
$\mathcal P^{h_s}(K)=0$ as well.
By the definition of packing measure, there is a countable cover
$K\subseteq\bigcup_jA_j$ such that
\[
\sum_j\mathcal P_0^{h_s}(A_j)<1.
\]
Replace $A_j$ by $E_j=A_j\cap K$ and set $F_j=\overline{E_j}$. Then each $F_j$ is
compact, $F_j\subseteq K$, the sets $F_j$ still cover $K$, and
\[
C_j:=\mathcal P_0^{h_s}(E_j)<\infty.
\]
Discard any empty $E_j$.

We claim that
\[
\overdimBlog F_j\le s
\qquad\text{for every }j.
\]
Fix $j$. By the definition of packing premeasure, there is $\delta_j>0$ such
that every $\delta_j$-packing $\{B(x_\ell,r_\ell)\}_\ell$ of $E_j$ satisfies
\[
\sum_\ell h_s(2r_\ell)\le C_j+1.
\]
For $0<2r\le\delta_j$, take a maximal disjoint family
\[
\{B(x_\ell,r)\}_{\ell=1}^{M},
\qquad\text{with }x_\ell\in E_j.
\]
Then
\[
M h_s(2r)\le C_j+1.
\]
By maximality, the closed balls $B(x_\ell,2r)$ cover $E_j$ and therefore also
its closure $F_j$. Consequently
\[
N(F_j,2r)
\le M
\le\frac{C_j+1}{h_s(2r)}
=(C_j+1)\bigl(\log(1/(2r))\bigr)^s.
\]
Taking logarithms, dividing by $\log\log(1/(2r))$, and letting $r$ tend to zero
proves the claim.

Let $\mu$ be a Rajchman probability measure with $\supp\mu\subseteq K$. Since
$K\subseteq\bigcup_jF_j$ and $\mu(K)=1$, there is an index $j$ for which
$\mu(F_j)>0$. By Lemma~\ref{lem:band}, the normalized restriction
\[
\nu=\frac{\mu|_{F_j}}{\mu(F_j)}
\]
is Rajchman.

Now $\supp\nu\subseteq F_j$, so Proposition~\ref{prop:logbox-lower} applied to
$F_j$ gives
\[
\overdimBlog F_j\ge1,
\]
contradicting the claim above because $s<1$.
\end{proof}

\subsection{Thin supports at the critical endpoint}

The theorem of Iva\v{s}ev-Musatov already gives, for every Hausdorff gauge $h$, a compact
$h$-null set supporting a Rajchman measure \cite{IvashevMusatov,Lyons}; K\"orner proved
substantial refinements \cite{KornerIvashev}. The purpose of this subsection is different.
We impose, at the same time, the smallest possible logarithmic upper box and packing
dimensions. The finite-sampling lemma below is the main ingredient in the construction.

\begin{lemma}\label{lem:finite-sampling}
Let $0<\eta<1$, let $R\ge2$, and suppose that
\[
\nu=\frac1J\sum_{v=1}^J\nu_v,
\]
where the $\nu_v$ are probability measures on $[0,1]$. There is an absolute
constant $C>0$ with the following property. If $b\ge1$ is an integer and
\[
M:=bJ\ge C\eta^{-2}\log(R/\eta),
\]
then there are points $x_{k,v}\in\supp\nu_v$, $1\le k\le b$ and $1\le v\le J$,
such that the finitely supported probability measure
\[
\sigma=\frac1M\sum_{v=1}^J\sum_{k=1}^b\delta_{x_{k,v}}
\]
satisfies
\[
\sup_{\abs\xi\le R}\abs{\fhat\sigma(\xi)-\fhat\nu(\xi)}\le\eta.
\]
Moreover, suppose that $b=2$ and that each $\nu_v$ has a density $\phi_v$ with
$\lVert\phi_v\rVert_\infty\le L$. If
\[
0<r\le\frac1{40LJ},
\]
then the points may be chosen so that
\[
\abs{x_{1,v}-x_{2,v}}>5r
\qquad\text{for every }v\text{ with }1\le v\le J.
\]
\end{lemma}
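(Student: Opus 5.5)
We prove Lemma~\ref{lem:finite-sampling} by the probabilistic method, using stratified random sampling. For each $v$ and each $k\le b$, draw $x_{k,v}$ independently with law $\nu_v$. Then $x_{k,v}\in\supp\nu_v$ almost surely, and $\mathbb E\,\fhat\sigma(\xi)=\frac1J\sum_v\fhat{\nu_v}(\xi)=\fhat\nu(\xi)$ for every $\xi$. For fixed $\xi$, the quantity $\fhat\sigma(\xi)-\fhat\nu(\xi)$ is an average of $M$ independent, mean-zero, complex variables of modulus at most $2$. Applying Hoeffding's inequality separately to the real and imaginary parts gives
\[
\mathbb P\bigl(\abs{\fhat\sigma(\xi)-\fhat\nu(\xi)}>\eta/2\bigr)\le4\exp(-cM\eta^2)
\]
for an absolute constant $c>0$.

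To make the bound uniform over $\abs\xi\le R$, we discretize. Since all measures live on $[0,1]$, both $\fhat\sigma$ and $\fhat\nu$ are $2\pi$-Lipschitz in $\xi$. Take a grid $G\subseteq[-R,R]$ of spacing $\eta/(8\pi)$, so that $\#G\le 1+16\pi R/\eta$. Off the grid, Lipschitz continuity adds at most $2\cdot2\pi\cdot\eta/(16\pi)=\eta/4$ to the error. A union bound over $G$ shows that the failure probability is at most $4(1+16\pi R/\eta)\exp(-cM\eta^2)$. This is less than $1$ once $M\ge C\eta^{-2}\log(R/\eta)$, using $R\ge2$ and $\eta<1$ so that $\log(R/\eta)\ge\log 2$. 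This proves the first assertion.

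For the separation clause, with $b=2$ the pair $(x_{1,v},x_{2,v})$ consists of two independent draws from $\phi_v\,dx$. Hence
\[
\mathbb P\bigl(\abs{x_{1,v}-x_{2,v}}\le5r\bigr)=\int\nu_v\bigl([y-5r,y+5r]\bigr)\,d\nu_v(y)\le10rL.
\]
A union bound over $v$ gives total failure probability at most $10rLJ\le\frac14$ by the hypothesis $r\le1/(40LJ)$.

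It remains to ensure that the Fourier failure probability is also at most, say, $\frac12$, which holds after enlarging $C$. Then with positive probability both events hold simultaneously, and a deterministic choice of points exists.

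The main obstacle is purely bookkeeping. We must make the constant $C$ absolute, covering the case where the grid size is dominated by the constant term, and handle the complex-valued concentration inequality. Neither step is deep.
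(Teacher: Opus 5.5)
Your proposal is correct and is essentially the paper's proof: independent draws $X_{k,v}$ with law $\nu_v$, Hoeffding applied to real and imaginary parts, a union bound over a grid in $[-R,R]$ of mesh comparable to $\eta$ using the $4\pi$-Lipschitz bound for $\fhat\sigma-\fhat\nu$, and the separation estimate $\mathbb P(\abs{X_{1,v}-X_{2,v}}\le5r)\le10Lr$ summed over $v$. The only differences are cosmetic: your grid spacing, and how the total failure probability is split between the Fourier event and the separation event.
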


\begin{proof}
For each $v$, let $X_{1,v},\ldots,X_{b,v}$ be independent random variables with law
$\nu_v$, and assume that all these variables are mutually independent. Define the random
probability measure
\[
\sigma=\frac1M\sum_{v=1}^J\sum_{k=1}^b\delta_{X_{k,v}}.
\]
For every fixed $\xi\in\R$,
\[
\mathbb E[\fhat\sigma(\xi)]
=\frac1M\sum_{v=1}^J\sum_{k=1}^b
\mathbb E\bigl[e^{-2\pi i\xi X_{k,v}}\bigr]
=\fhat\nu(\xi).
\]

We use Hoeffding's inequality
\cite[Theorem~2, inequality~(2.6), p.~16]{Hoeffding}, applied also to the negatives of
the random variables. If $Y_1,\ldots,Y_M$ are independent real random variables with
$a_\ell\le Y_\ell\le b_\ell$ almost surely for every $\ell$ with $1\le\ell\le M$, then,
for $t>0$,
\[
\mathbb P\left(
\abs{\frac1M\sum_{\ell=1}^M(Y_\ell-\mathbb EY_\ell)}>t
\right)
\le2\exp\left(
-\frac{2M^2t^2}{\sum_{\ell=1}^M(b_\ell-a_\ell)^2}
\right).
\]
Apply this separately to the real and imaginary parts of
$e^{-2\pi i\xi X_{k,v}}$, both of which lie in $[-1,1]$. If a complex number has
modulus greater than $\eta/2$, then one of its real and imaginary parts has absolute
value greater than $\eta/(2\sqrt2)$. A union bound therefore gives
\begin{equation}\label{eq:hoeffding-fourier}
\mathbb P\left(
\abs{\fhat\sigma(\xi)-\fhat\nu(\xi)}>\frac\eta2
\right)
\le4\exp\left(-\frac{M\eta^2}{16}\right).
\end{equation}

For any probability measure $\rho$ on $[0,1]$,
\[
\begin{aligned}
\abs{\fhat\rho(\xi)-\fhat\rho(\zeta)}
&\le\int_{[0,1]}
\abs{e^{-2\pi i\xi x}-e^{-2\pi i\zeta x}}\,d\rho(x)\\
&\le2\pi\abs{\xi-\zeta}.
\end{aligned}
\]
Indeed, $\abs{e^{iu}-e^{iv}}\le\abs{u-v}$ and $0\le x\le1$. Thus
$\fhat\sigma-\fhat\nu$ is $4\pi$-Lipschitz.

Set $\kappa=\eta/(16\pi)$ and choose a finite grid $\Gamma\subseteq[-R,R]$ containing the
endpoints and having mesh at most $\kappa$, where the mesh is the largest distance between
consecutive grid points. We may arrange that
\[
\#\Gamma\le2+\frac{2R}{\kappa}=2+\frac{32\pi R}{\eta},
\]
and every $\xi\in[-R,R]$ lies within distance $\kappa$ of some $\gamma\in\Gamma$. Put
$F(\xi)=\fhat\sigma(\xi)-\fhat\nu(\xi)$. If
$\sup_{\abs\xi\le R}\abs{F(\xi)}>\eta$, then for some $\xi\in[-R,R]$ and a grid point
$\gamma$ with $\abs{\xi-\gamma}\le\kappa$,
\[
\abs{F(\gamma)}
\ge\abs{F(\xi)}-4\pi\abs{\xi-\gamma}
>\eta-4\pi\kappa
=\frac{3\eta}{4}
>\frac\eta2.
\]
Consequently, the union bound and \eqref{eq:hoeffding-fourier} give
\[
\begin{aligned}
\mathbb P\left(
\sup_{\abs\xi\le R}\abs{\fhat\sigma(\xi)-\fhat\nu(\xi)}>\eta
\right)
&\le\sum_{\gamma\in\Gamma}
\mathbb P\left(\abs{F(\gamma)}>\frac\eta2\right)\\
&\le C_0\frac R\eta
\exp\left(-\frac{M\eta^2}{16}\right)
\end{aligned}
\]
for an absolute constant $C_0$. The assumption on $M$ makes the last expression at
most $C_0(R/\eta)^{1-C/16}$, which is less than $1/4$ if the absolute constant $C$ in
the statement is sufficiently large. Moreover, $X_{k,v}\in\supp\nu_v$ almost surely.
Hence some realization gives the first assertion.

Now assume the hypotheses of the final assertion. For every $v$,
\[
\begin{aligned}
\mathbb P\bigl(\abs{X_{1,v}-X_{2,v}}\le5r\bigr)
&=\int_{[0,1]}\nu_v\bigl([x-5r,x+5r]\bigr)\,d\nu_v(x)\\
&\le10Lr.
\end{aligned}
\]
The inequality uses the density bound and the fact that the interval has length $10r$.
The union bound over $v=1,\ldots,J$ shows that the probability of a separation failure
for at least one $v$ is at most
\[
10LJr\le\frac14.
\]
Thus the probability that either the Fourier approximation or one of the separation
conditions fails is less than $1/2$. A realization for which neither failure occurs
supplies all the required points simultaneously.
\end{proof}

\begin{theorem}\label{thm:thin-support}
Let $h$ be a Hausdorff gauge. There exist
a compact set $K\subseteq(0,1)$ and a Rajchman probability measure $\mu$ such that
\[
\supp\mu=K,\qquad
\mathcal H^h(K)=0,\qquad
\overdimBlog K=\dimPlog K=1.
\]
In particular, there exist a compact set $K\subseteq(0,1)$ and a Rajchman probability
measure $\mu$ such that
\[
\supp\mu=K,\qquad
\dimHlog K=0,\qquad
\dimPlog K=1.
\]
\end{theorem}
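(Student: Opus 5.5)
The lower bounds are already available. Proposition~\ref{prop:logbox-lower} and Corollary~\ref{cor:logpacking-lower} give $\overdimBlog K\ge1$ and $\dimPlog K\ge1$ for any compact Rajchman support. So the work is to construct $K$ and $\mu$ with $\mathcal H^h(K)=0$ and the matching upper bound $\overdimBlog K\le1$. That bound suffices, because $\dimPlog K\le\overdimBlog K$ holds for compact $K$ by the covering argument in the proof of Corollary~\ref{cor:logpacking-lower}, run in reverse. The ``in particular'' clause then follows by choosing a gauge $h$ with $h(r)\ge(\log(1/r))^{-s}$ for every $s>0$ and all small $r$. Then $\mathcal H^{h_s}(K)\le\mathcal H^h(K)=0$ for every $s>0$, so $\dimHlog K=0$.

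For the construction I would follow the Iva\v{s}ev-Musatov scheme and use Lemma~\ref{lem:finite-sampling} as the inductive step. At stage $n$ we have radii $r_n\searrow0$ and a union $K_n$ of $N_n$ disjoint closed intervals of radius $r_n$. On $K_n$ sits an absolutely continuous probability measure $\mu_n$ which is a normalized sum of uniform probability measures $\nu_v$ on these intervals, possibly with unequal multiplicities. Its density bound is $L\approx1/(J r_n)$, where $J=N_n$. Given $\mu_n$, we use the Riemann--Lebesgue lemma to pick $R_n$ such that $\abs{\fhat{\mu_n}(\xi)}\le\eta_n$ for $\abs\xi\ge R_n$. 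We then apply Lemma~\ref{lem:finite-sampling} with $b=2$ and with $R$ comparable to $1/r_{n+1}$; this needs $2N_n\ge C\eta_n^{-2}\log(1/(r_{n+1}\eta_n))$. The lemma yields $\sigma_n$ with $\abs{\fhat\sigma_n-\fhat{\mu_n}}\le\eta_n$ on $\abs\xi\le R$, and with the two points chosen in each interval $5r_{n+1}$-separated. Here $r_{n+1}\le1/(40LJ)$ is harmless, since it amounts to $r_{n+1}\lesssim r_n$.

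Next set $\mu_{n+1}=\sigma_n*\phi_{r_{n+1}}$, where $\phi_r$ is the normalized indicator of $[-r,r]$. Since $\abs{\fhat{\phi_r}}\le1$, $\abs{1-\fhat{\phi_r}(\xi)}\lesssim r\abs\xi$, and $\abs{\fhat{\phi_r}(\xi)}\lesssim(r\abs\xi)^{-1}$, the new transform satisfies three estimates. It is within $O(\eta_n+r_{n+1}R_n)$ of $\fhat{\mu_n}$ for $\abs\xi\le R_n$. It is $O(\eta_n)$ on $R_n\le\abs\xi\le R$. It is $O(\eta_n^{-1}r_{n+1}R)$-small beyond, after enlarging $R$ to a large multiple of $\eta_n^{-1}/r_{n+1}$. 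The weak-$*$ limit $\mu$ is supported on $K=\bigcap K_n$, and full support follows from the separation. Choosing $\sum\eta_n<\infty$ and $r_{n+1}R_n\to0$ then gives $\limsup_{\abs\xi\to\infty}\abs{\fhat\mu(\xi)}=0$, by the usual telescoping over the windows $[R_n,R_{n+1}]$.

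For the box bound I would set $N_n=2^n$ and choose the scales so that $\log(1/r_{n+1})\approx2^{n}\eta_n^2/C$, with $\eta_n\to0$ slowly, for example $\eta_n=n^{-2}$. Then for $r_{n+1}\le r\le r_n$ we get $N(K,r)\le2N_{n}\le(\log(1/r))^{1+o(1)}$, which gives $\overdimBlog K\le1$.

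The main obstacle is $\mathcal H^h(K)=0$ for an arbitrary gauge. The homogeneous scheme above has $N(K,r)\approx\log(1/r)$, so any single-scale cover costs about $h(r)\log(1/r)$. That need not tend to zero, because $h$ may dominate $1/\log(1/r)$. Hausdorff nullity must therefore come from inhomogeneous covers. My first attempt would exploit the freedom in the sampling step: the multiplicities among the $\nu_v$, together with the placement of sampled points, can concentrate most of the $\mu$-mass in few intervals at sparse stages $n_k$. One then covers the heavy part coarsely at scale $r_{n_k}$, and the light part finely much later, with a total cost $\sum h(\cdot)\le2^{-k}$. I expect this to require a more delicate bookkeeping of the counting function $N(K,r)$ at the intermediate scales, so that the limsup still stays at $1$.
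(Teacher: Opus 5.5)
Your construction of the carrier is essentially the paper's Steps~1 and~2. It is a binary Moran scheme driven by Lemma~\ref{lem:finite-sampling}, with $N_n\asymp2^n$, errors $\eta_n\asymp n^{-2}$ and $\log(1/r_n)\asymp2^n\eta_n^2$, and it gives a Rajchman measure on a compact $K_*$ with $\overdimBlog K_*\le1$. There are two small slips.
\begin{itemize}
\item In that lemma, $L$ bounds the density of each single $\nu_v$, which is $\asymp1/r_n$. So the separation hypothesis reads $r_{n+1}\lesssim r_n/N_n$, not $r_{n+1}\lesssim r_n$; this is still harmless with your scales.
\item The inequality $\dimPlog K\le\overdimBlog K$ is true, but it is not a simple reversal of the covering argument. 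Grouping packing radii dyadically in $r$ only gives $\dimPlog K\le\overdimBlog K+1$ for logarithmic gauges. You must group by dyadic values of $\log(1/r)$, or use the paper's mass bound $\rho(B(x,r))\ge c_sh_s(2r)$.
\end{itemize}

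The genuine gap is $\mathcal H^h(K)=0$, the only part of the statement beyond these steps. You give only a tentative plan for it, and that plan cannot work as described. Hausdorff measure sees only the set, not the distribution of mass. By Lemma~\ref{lem:band}, every closed $F\subseteq K$ with $\mu(F)>0$ again supports a Rajchman measure. The Dirichlet argument in Proposition~\ref{prop:logbox-lower} only needs $N(F,r)\le(\log(1/r))^\theta$ along some sequence $r_k\searrow0$. It therefore gives $N(F,r)\ge(\log(1/r))^{1-o(1)}$ at every small scale. For a gauge like $h(r)=1/\log\log(1/r)$, this forces $N(F,r)h(2r)\to\infty$ as $r\searrow0$ for every such $F$, in particular for your light part. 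So covering it ``finely much later'' at a single scale is never cheap. The light part would have to be split again into heavy and light pieces, and so on indefinitely, with every stage compatible with both the Fourier estimates and the counting bound. Nothing in your sketch provides this.

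The missing idea is that no modification of the construction is needed. The paper keeps $K_*$ and $\rho$ and extracts a thin support inside $K_*$ by the Iva\v{s}ev-Musatov Baire-category argument. It works in the complete space $\mathcal R(K_*)$ of transforms of Rajchman probabilities on $K_*$, with the uniform norm. For relatively open dense $V\subseteq K_*$, consider the sets $\{\fhat\nu:\nu(V)>1-1/q\}$.
\begin{itemize}
\item They are open, because uniform convergence of transforms forces weak convergence and $\nu\mapsto\nu(V)$ is lower semicontinuous.
\item They are dense. Normalized restrictions of $\rho$ to small pieces of $V$ are Rajchman by Lemma~\ref{lem:band} and approximate point masses, so measures with $\nu(V)=1$ are weakly dense. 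Weak and norm closures of this convex set coincide by Hahn--Banach.
\end{itemize}
Now take $V_m=K_*\cap\bigcup_jI_{m,j}$, with intervals about a dense sequence and $\sum_jh(\operatorname{diam}I_{m,j})<2^{-m}$. A generic $\nu$ then gives full mass to the $h$-null set $\bigcap_mV_m$. A compact subset of positive $\nu$-mass and Lemma~\ref{lem:band} yield $\mu$ with $K=\supp\mu\subseteq K_*$. The upper bounds for $\overdimBlog K$ and $\dimPlog K$ follow by monotonicity, and the lower bounds from Proposition~\ref{prop:logbox-lower} and Corollary~\ref{cor:logpacking-lower}. Separating nullity from dimension control in this way is what removes the delicate bookkeeping you anticipated.
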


\begin{proof}
We divide the construction into three steps. The first produces a Rajchman measure whose
support is a binary Moran set at the critical logarithmic scale. The second obtains the
sharp box and packing estimates from the cylinder counts and masses. The last step
extracts an $h$-null support without increasing its packing dimension.

\smallskip
\noindent\emph{Step 1: a binary Moran Rajchman measure.}
Fix a nonnegative function
\[
\phi\in C_c^\infty((-1,1)),
\qquad
\int_{\R}\phi(x)\,dx=1,
\]
and write $\phi_r(x)=r^{-1}\phi(x/r)$. Put
\[
C_\phi=2\pi\int_{\R}\abs x\phi(x)\,dx.
\]
Then
\begin{equation}\label{eq:phi-near-zero}
\abs{1-\fhat\phi(t)}\le C_\phi\abs t.
\end{equation}
This follows from $\abs{1-e^{-2\pi itx}}\le2\pi\abs{tx}$ and the definition of
$C_\phi$.

Since $\fhat\phi$ is a Schwartz function, there is a constant $C_2$ such that
\[
\abs{\fhat\phi(t)}\le C_2(1+\abs t)^{-2}.
\]
Let $n_0$ be an integer to be chosen below, and set
\[
\eps_n=(n+n_0)^{-2}
\qquad\text{for all }n\ge0.
\]

Fix a sufficiently large constant $a$ and set $A_n=a(n+n_0)$. Then $A_n\ge2$ and
\begin{equation}\label{eq:phi-tail}
\sup_{\abs t\ge A_n}\abs{\fhat\phi(t)}\le\eps_n
\qquad\text{for all }n\ge1
\end{equation}
and $\log A_n=O(\log(n+n_0))$.

Let $C$ be the constant in Lemma~\ref{lem:finite-sampling}. Choose a sufficiently small absolute constant $\alpha>0$, then take $n_0$ sufficiently large and the integer $B$
sufficiently large. Define
\[
M_n=B2^n,\qquad
L_n=\alpha\eps_n^2M_n
=\frac{\alpha B2^n}{(n+n_0)^4},
\qquad
D_n=e^{L_n},
\qquad
r_n=\frac{A_{n+1}}{D_n}.
\]
Here $M_n$ is the number of level-$n$ cylinders, $\eps_n$ is the Fourier-error budget,
$D_n$ is the frequency cutoff, and $r_n$ is the smoothing scale. Notice that
$r_nD_n=A_{n+1}$; hence, for $\abs\xi\ge D_n$, \eqref{eq:phi-tail} controls the factor
$\fhat\phi(r_n\xi)$.
The choices can be made so that $L_n$ is increasing, $D_0\ge2$, and, for every
$n\ge1$,
\begin{equation}\label{eq:parameter-conditions}
\begin{aligned}
C_\phi r_nD_{n-1}&\le\eps_n,
&5r_n&<r_{n-1},\\
M_{n-1}\frac{r_n}{r_{n-1}}
&\le\min\left\{2^{-n-10},\frac1{40\lVert\phi\rVert_\infty}\right\},
&M_n&\ge C\eps_n^{-2}\log(D_n/\eps_n).
\end{aligned}
\end{equation}
For completeness, let $\Delta_n=L_n-L_{n-1}$. After taking $n_0$ large,
\[
\Delta_n\asymp\frac{\alpha B2^n}{(n+n_0)^4}>0
\]
and the definitions give
\[
 r_nD_{n-1}=A_{n+1}e^{-\Delta_n},
 \qquad
\frac{r_n}{r_{n-1}}=\frac{A_{n+1}}{A_n}e^{-\Delta_n}.
\]
Thus a sufficiently large increment $\Delta_n$ makes both $r_nD_{n-1}$ and
$r_n/r_{n-1}$ small. More precisely, $\Delta_n$ eventually dominates
\[
\log M_{n-1}+\log A_n+\log A_{n+1}
+\log(1/\eps_n)+n+\log\bigl(1+\lVert\phi\rVert_\infty\bigr).
\]
Indeed, for every fixed $B>0$,
\[
\Delta_n-\log M_{n-1}\xrightarrow[n\to\infty]{}+\infty,
\]
and the other terms displayed above grow at most linearly or logarithmically in $n$.
After fixing $\alpha$ and $n_0$, increasing $B$ handles the finitely many remaining
indices and gives all the conditions in
\eqref{eq:parameter-conditions} except the final sampling-size condition. For that
condition, observe that
\[
C\eps_n^{-2}\log(D_n/\eps_n)
=C\alpha M_n+C\eps_n^{-2}\log(1/\eps_n).
\]
Taking $\alpha<(2C)^{-1}$ and then increasing $B$ makes this at most $M_n$ for every
$n$. Finally, as $B$ tends to infinity, $D_0$ tends to infinity while $Br_0$ tends to
zero. Indeed,
\[
D_0=e^{\alpha B/n_0^4},
\qquad
Br_0=BA_1e^{-\alpha B/n_0^4}.
\]
We may therefore also arrange $D_0\ge2$ and $4Br_0<1$.

Choose points $c_{0,1},\ldots,c_{0,B}$ such that the closed intervals
\[
I_{0,v}=[c_{0,v}-2r_0,c_{0,v}+2r_0],
\qquad 1\le v\le B,
\]
are pairwise disjoint and contained in $(0,1)$; this is possible because
$4Br_0<1$. We call these intervals the level-$0$ cylinders and set
\[
\mu_0=\frac1B\sum_{v=1}^{B}\phi_{r_0}(x-c_{0,v})\,dx.
\]

Suppose inductively that the level-$(n-1)$ cylinders have centers
$c_{n-1,1},\ldots,c_{n-1,M_{n-1}}$ and that
\[
\mu_{n-1}=\frac1{M_{n-1}}\sum_{v=1}^{M_{n-1}}\nu_{n-1,v},
\qquad
\nu_{n-1,v}=\phi_{r_{n-1}}(x-c_{n-1,v})\,dx.
\]
Each $\nu_{n-1,v}$ is a probability measure supported in the interval of radius
$r_{n-1}$ about $c_{n-1,v}$, and hence inside the corresponding level-$(n-1)$
cylinder. We apply Lemma~\ref{lem:finite-sampling} with
\[
J=M_{n-1},\qquad b=2,\qquad \eta=\eps_n,\qquad R=D_n.
\]
The density of each $\nu_{n-1,v}$ is bounded by
$\lVert\phi\rVert_\infty/r_{n-1}$, and the inequality
\[
M_{n-1}\frac{r_n}{r_{n-1}}
\le\frac1{40\lVert\phi\rVert_\infty}
\]
is equivalent to the separation hypothesis of the lemma with $r=r_n$. The final
inequality in \eqref{eq:parameter-conditions} is its sampling-size hypothesis. Thus the
lemma provides points
\[
x_{k,v}^{(n)}\in\supp\nu_{n-1,v},
\qquad
1\le k\le2,
\quad
1\le v\le M_{n-1}.
\]
Define
\[
\sigma_n
=\frac1{M_n}
\sum_{v=1}^{M_{n-1}}\sum_{k=1}^{2}\delta_{x_{k,v}^{(n)}}.
\]
Since $M_n=2M_{n-1}$, this is a probability measure. The averaged input measure in
Lemma~\ref{lem:finite-sampling} is
\[
\frac1{M_{n-1}}\sum_{v=1}^{M_{n-1}}\nu_{n-1,v}=\mu_{n-1}.
\]
Therefore its Fourier-approximation conclusion gives
\begin{equation}\label{eq:sampling-approx}
\sup_{\abs\xi\le D_n}
\abs{\fhat{\sigma_n}(\xi)-\fhat{\mu_{n-1}}(\xi)}
\le\eps_n.
\end{equation}
Its separation conclusion also gives
\[
\abs{x_{1,v}^{(n)}-x_{2,v}^{(n)}}>5r_n
\qquad
\text{for all }1\le v\le M_{n-1}.
\]
Enumerate the sampled points as $c_{n,1},\ldots,c_{n,M_n}$ and define
\[
I_{n,w}=[c_{n,w}-2r_n,c_{n,w}+2r_n],
\qquad 1\le w\le M_n.
\]
These intervals are the level-$n$ cylinders. Set
\[
\mu_n=\sigma_n*\bigl(\phi_{r_n}(x)\,dx\bigr)
=\frac1{M_n}\sum_{w=1}^{M_n}\phi_{r_n}(x-c_{n,w})\,dx.
\]
Thus every atom of $\sigma_n$ is replaced by a smooth probability bump supported in
the interval of radius $r_n$ about that atom, which lies inside the corresponding
level-$n$ cylinder. The two children of a fixed level-$(n-1)$ cylinder are disjoint
because their centers are more than $5r_n$ apart. Moreover,
$x_{k,v}^{(n)}\in\supp\nu_{n-1,v}$ implies
\[
\abs{x_{k,v}^{(n)}-c_{n-1,v}}\le r_{n-1}.
\]
Since $2r_n<r_{n-1}$, every point of this child cylinder is at distance less than
$r_{n-1}+2r_n<2r_{n-1}$ from $c_{n-1,v}$; hence the child cylinder is contained in its
parent. Children belonging to different parents are disjoint because the parent cylinders
are disjoint. Hence the cylinders form a nested binary family. Let $K_n$ be the union of
the level-$n$ cylinders and put
\[
K_*=\bigcap_{n=0}^{\infty}K_n.
\]

We next prove Fourier decay. The three estimates below cover low, intermediate, and high
frequencies: successive measures are close at low frequencies, the previous decay
controls the intermediate annulus, and the smooth factor suppresses high frequencies.
Since $r_{n-1}D_{n-1}=A_n$,
\eqref{eq:phi-tail} gives
\begin{equation}\label{eq:old-high}
\abs{\fhat{\mu_{n-1}}(\xi)}\le\eps_n
\qquad\text{for every }\xi\text{ with }\abs\xi\ge D_{n-1}.
\end{equation}
Since convolution turns into multiplication under the Fourier transform,
\[
\fhat{\mu_n}(\xi)=\fhat{\sigma_n}(\xi)\fhat\phi(r_n\xi).
\]
For $\abs\xi\le D_{n-1}$, add and subtract
$\fhat{\mu_{n-1}}(\xi)\fhat\phi(r_n\xi)$. Since
$\abs{\fhat\phi}\le1$ and $\abs{\fhat{\mu_{n-1}}}\le1$, equations
\eqref{eq:sampling-approx}, \eqref{eq:phi-near-zero}, and
\eqref{eq:parameter-conditions} give
\begin{equation}\label{eq:low-transition}
\begin{aligned}
\abs{\fhat{\mu_n}(\xi)-\fhat{\mu_{n-1}}(\xi)}
&\le \abs{\fhat\phi(r_n\xi)}
 \abs{\fhat{\sigma_n}(\xi)-\fhat{\mu_{n-1}}(\xi)}\\
&\quad+\abs{\fhat{\mu_{n-1}}(\xi)}
 \abs{\fhat\phi(r_n\xi)-1}\\
&\le\eps_n+C_\phi r_nD_{n-1}
\le2\eps_n.
\end{aligned}
\end{equation}
For $D_{n-1}\le\abs\xi\le D_n$, we instead use
$\abs{\fhat\phi}\le1$ to obtain
\begin{equation}\label{eq:new-annulus}
\abs{\fhat{\mu_n}(\xi)}
\le\abs{\fhat{\sigma_n}(\xi)}
\le\abs{\fhat{\mu_{n-1}}(\xi)}+\eps_n
\le2\eps_n.
\end{equation}
Finally, if $\abs\xi\ge D_n$, then $r_n\abs\xi\ge A_{n+1}$, so
\begin{equation}\label{eq:new-high}
\abs{\fhat{\mu_n}(\xi)}
\le\abs{\fhat\phi(r_n\xi)}
\le\eps_{n+1}.
\end{equation}
Thus $\fhat{\mu_n}$ is stable on the old frequency range, is at most $2\eps_n$ on the
new annulus, and has size at most $\eps_{n+1}$ beyond $D_n$.

All the measures are supported in the fixed compact interval $[0,1]$. On any fixed
compact frequency interval, \eqref{eq:low-transition} applies at every sufficiently
large stage because $D_n$ tends to infinity. Since $\sum_n\eps_n<\infty$, the Fourier
transforms are therefore uniformly Cauchy on that interval. Every weakly
convergent subsequence therefore has the same limiting Fourier transform. Compactness of
the space of probabilities on $[0,1]$ and uniqueness of Fourier transforms show that all
weak cluster points coincide. Hence the full sequence converges weakly to a probability
measure $\rho$. For each fixed $n$, every later measure is supported in $K_n$. Since
$K_n$ is closed, the Portmanteau theorem gives
\[
1=\limsup_{m\to\infty}\mu_m(K_n)\le\rho(K_n),
\]
so $\rho(K_n)=1$. Since the sets $K_n$ decrease to $K_*$, continuity from above yields
$\rho(K_*)=1$. If
$D_{n-1}\le\abs\xi\le D_n$, then $\abs\xi\le D_{j-1}$ for every $j>n$, so
\eqref{eq:low-transition} can be telescoped from $\mu_n$ to $\rho$. Together with
\eqref{eq:new-annulus}, this gives
\[
\abs{\fhat\rho(\xi)}
\le2\eps_n+2\sum_{j>n}\eps_j
\xrightarrow[n\to\infty]{}0.
\]
This bound is uniform over the $n$th annulus, whose index tends to infinity with
$\abs\xi$. Thus $\rho$ is Rajchman.

We next compute the cylinder masses, by which we mean the numbers $\rho(I)$ for the
closed level-$n$ intervals $I$ whose union is $K_n$. Every level-$n$ cylinder $I$ has
\[
\rho(I)=M_n^{-1}.
\]
Indeed, if $m\ge n$, then $I$ contains exactly $2^{m-n}$ descendant level-$m$
cylinders. The measure $\mu_m$ places one probability bump of total mass $M_m^{-1}$
inside each such descendant, and hence
\[
\mu_m(I)=2^{m-n}M_m^{-1}=M_n^{-1}.
\]
Choose a continuous function $f$ that equals $1$ on $I$ and $0$ on every other
level-$n$ cylinder. Because $\mu_m$ is supported on $K_n$ for $m\ge n$, its integral
against $f$ is $M_n^{-1}$. Likewise, $\rho(K_n)=1$. Hence weak convergence yields
\[
\rho(I)=\int_{[0,1]}f\,d\rho
=\lim_{m\to\infty}\int_{[0,1]}f\,d\mu_m
=M_n^{-1}.
\]

Every $x\in K_*$ belongs to a unique nested chain of cylinders $(I_n(x))_n$. Their
diameters $4r_n$ tend to zero, so every neighborhood of $x$ contains $I_n(x)$ for all
sufficiently large $n$, and this cylinder has positive $\rho$-mass. Consequently
\begin{equation}\label{eq:full-support}
\supp\rho=K_*.
\end{equation}

\smallskip
\noindent\emph{Step 2: the critical box and packing estimates.}
Let $\mathcal C_n$ be the family of level-$n$ cylinders, namely the $M_n$ closed
intervals whose union is $K_n$. Thus
\[
\#\mathcal C_n=M_n,
\qquad
\operatorname{diam}I=\delta_n:=4r_n,
\qquad
\rho(I)=M_n^{-1}
\quad\text{for every }I\in\mathcal C_n.
\]
The last identity is what we mean by the cylinder-mass formula. The number of cylinders
will control the upper box dimension, while their masses will control the packing
premeasures.

By \eqref{eq:parameter-conditions}, $\delta_n<\delta_{n-1}/5$, so
$(\delta_n)_n$ decreases to zero. Hence, for every sufficiently small $r>0$, there is a
unique $n$ such that
\[
\delta_n\le r<\delta_{n-1}.
\]
Each $I\in\mathcal C_n$ is contained in the interval with the same center and radius
$r$, because the radius of $I$ is $\delta_n/2\le r$. Since
$K_*\subseteq K_n=\bigcup_{I\in\mathcal C_n}I$, these $M_n$ enlarged intervals cover
$K_*$. Therefore
\[
N(K_*,r)\le M_n.
\]
Here
\[
\log M_n=n\log2+O(1)
\]
while
\[
\begin{aligned}
\log\log(1/\delta_{n-1})
&=\log\bigl(L_{n-1}-\log(4A_n)\bigr)\\
&=(n-1)\log2-4\log(n+n_0)+O(1).
\end{aligned}
\]
Since $r<\delta_{n-1}$, we also have
$\log\log(1/r)\ge\log\log(1/\delta_{n-1})$ for all sufficiently small $r$. Consequently,
\[
\frac{\log N(K_*,r)}{\log\log(1/r)}
\le
\frac{\log M_n}{\log\log(1/\delta_{n-1})}
\xrightarrow[n\to\infty]{}1,
\]
which gives
\[
\overdimBlog K_*\le1.
\]
Proposition~\ref{prop:logbox-lower} and \eqref{eq:full-support} give the reverse
inequality, so
\begin{equation}\label{eq:carrier-logbox}
\overdimBlog K_*=1.
\end{equation}

We now use the cylinder-mass formula. For $x\in K_*$, let $I_n(x)$ denote the unique
level-$n$ cylinder containing $x$. If $\delta_n\le r<\delta_{n-1}$, then
$I_n(x)\subseteq B(x,r)$: indeed, $x\in I_n(x)$ and every two points of $I_n(x)$ are
at distance at most $\operatorname{diam}I_n(x)=\delta_n\le r$. Therefore
\begin{equation}\label{eq:mass-lower}
\rho(B(x,r))\ge M_n^{-1}=B^{-1}2^{-n}.
\end{equation}
For $s>1$, monotonicity of $h_s$ gives
\[
h_s(r)\le h_s(\delta_{n-1})
\asymp L_{n-1}^{-s}
\asymp B^{-s}2^{-s(n-1)}(n+n_0)^{4s}.
\]
Here the first comparison follows from
$\log(1/\delta_{n-1})=L_{n-1}-\log(4A_n)\sim L_{n-1}$, and the second follows from
the definition of $L_{n-1}$. Combining this bound with \eqref{eq:mass-lower}, there
is a constant $c'_s>0$ such that, uniformly for $x\in K_*$ and
$\delta_n\le r<\delta_{n-1}$,
\[
\frac{\rho(B(x,r))}{h_s(r)}
\ge c'_s
B^{s-1}2^{(s-1)n}(n+n_0)^{-4s}
\xrightarrow[n\to\infty]{}\infty.
\]
The limit holds because $s>1$, so the exponential factor dominates the polynomial
factor. Moreover,
\[
\frac{h_s(2r)}{h_s(r)}\xrightarrow[r\searrow0]{}1.
\]
Hence, for every $s>1$, there is a constant $c_s>0$ such that
\begin{equation}\label{eq:frostman-reverse}
\rho(B(x,r))\ge c_sh_s(2r)
\end{equation}
for all $x\in K_*$ and all sufficiently small $r$.

Let $\delta>0$ be sufficiently small, and let $\{B(x_j,r_j)\}_j$ be a
$\delta$-packing of $K_*$. Since $2r_j\le\delta$, equation
\eqref{eq:frostman-reverse} applies to every packing ball and gives
\[
\sum_jh_s(2r_j)
\le c_s^{-1}\sum_j\rho(B(x_j,r_j))
\le c_s^{-1}.
\]
The last inequality holds because the packing balls are disjoint and $\rho$ is a
probability measure. Taking the supremum over all $\delta$-packings gives
\[
\mathcal P^{h_s}_\delta(K_*)\le c_s^{-1}.
\]
Letting $\delta$ tend to zero shows that the packing premeasure
$\mathcal P^{h_s}_0(K_*)$ is finite.
If $t>s$, then, for every sufficiently small $\delta$,
\[
\omega(\delta)
:=\sup_{0<u\le\delta}\frac{h_t(u)}{h_s(u)}
=\sup_{0<u\le\delta}\bigl(\log(1/u)\bigr)^{-(t-s)}
\xrightarrow[\delta\searrow0]{}0.
\]
For every $\delta$-packing as above, apply this ratio term by term:
\[
\begin{aligned}
\sum_jh_t(2r_j)
&=\sum_j\frac{h_t(2r_j)}{h_s(2r_j)}h_s(2r_j)\\
&\le\omega(\delta)\sum_jh_s(2r_j)
\le c_s^{-1}\omega(\delta).
\end{aligned}
\]
Taking the supremum over all $\delta$-packings yields
\[
\mathcal P_\delta^{h_t}(K_*)\le c_s^{-1}\omega(\delta).
\]
Letting $\delta$ tend to zero gives $\mathcal P^{h_t}_0(K_*)=0$. Using the single-set
cover of $K_*$ in the definition of packing measure, we obtain
\[
\mathcal P^{h_t}(K_*)\le\mathcal P^{h_t}_0(K_*)=0.
\]
Since for every $t>1$ we may choose $s$ with $1<s<t$,
\[
\dimPlog K_*\le1.
\]
Corollary~\ref{cor:logpacking-lower} supplies the reverse inequality:
\begin{equation}\label{eq:carrier-packing}
\dimPlog K_*=1.
\end{equation}

\smallskip
\noindent\emph{Step 3: extraction inside an arbitrary Hausdorff gauge.}
The set $K_*$ has the desired critical dimensions, but it need not be $h$-null. We now
adapt the Baire-category mechanism in the classical proof of the Iva\v{s}ev-Musatov
theorem; compare \cite[Section~3]{Lyons}. The point is to carry out that argument inside
$K_*$, so that the sharp logarithmic dimensions obtained in Step~2 are not lost. Let
\[
\mathcal R(K_*)
=\{\fhat\nu:\nu\text{ is a Rajchman probability with }\supp\nu\subseteq K_*\}
\subseteq C_0(\R),
\]
equipped with the uniform norm. This space contains $\fhat\rho$ and is complete. Indeed,
suppose that $\fhat{\nu_n}$ converges uniformly to $F\in C_0(\R)$. Compactness of the
probabilities on $K_*$ gives a subsequence converging weakly to a probability $\nu$ with
$\supp\nu\subseteq K_*$. For every $\xi\in\R$, weak convergence gives
\[
\fhat\nu(\xi)=\lim_{k\to\infty}\fhat{\nu_{n_k}}(\xi)=F(\xi).
\]
Since $F\in C_0(\R)$, the measure $\nu$ is Rajchman, and hence
$F\in\mathcal R(K_*)$.

The key density observation is the following. If $V\subseteq K_*$ is relatively open
and dense and $q\ge2$, put
\[
\mathcal U(V,q)
=\{\fhat\nu\in\mathcal R(K_*):\nu(V)>1-1/q\}.
\]
Thus $\fhat\nu\in\mathcal U(V,q)$ precisely when $\nu(K_*\setminus V)<1/q$. This set is
open and dense in $\mathcal R(K_*)$. We verify the two assertions separately.

First, suppose that $\fhat{\nu_k}$ converges uniformly to $\fhat\nu$, with all the
transforms in $\mathcal R(K_*)$. Then, for every $\xi\in\R$,
\[
\int_{K_*}e^{-2\pi i\xi x}\,d\nu_k(x)
\xrightarrow[k\to\infty]{}
\int_{K_*}e^{-2\pi i\xi x}\,d\nu(x).
\]
By the Stone--Weierstrass theorem, finite linear combinations of the characters
$x\mapsto e^{-2\pi i\xi x}$, $\xi\in\R$, are uniformly dense in $C(K_*)$. If $f\in
C(K_*)$ and $P$ is such a linear combination, then
\[
\begin{aligned}
\abs{\int_{K_*}f\,d\nu_k-\int_{K_*}f\,d\nu}
&\le2\lVert f-P\rVert_\infty\\
&\quad+\abs{\int_{K_*}P\,d\nu_k-\int_{K_*}P\,d\nu}.
\end{aligned}
\]
The last term tends to zero, and $P$ can approximate $f$ arbitrarily well. Thus
$\nu_k$ converges weakly to $\nu$; here weak convergence means convergence of the
integrals of every function in $C(K_*)$.

The map $\nu\mapsto\nu(V)$ is lower semicontinuous for weak convergence. Explicitly,
this means that
\begin{equation}\label{eq:open-set-lsc}
\nu(V)\le\liminf_{k\to\infty}\nu_k(V)
\end{equation}
whenever $\nu_k$ converges weakly to $\nu$. To see this, suppose first that
$V\ne K_*$ and define
\[
f_\ell(x)
=\min\{1,\ell\,\dist(x,K_*\setminus V)\},
\qquad x\in K_*.
\]
The continuous functions $f_\ell$ increase pointwise to $\mathbf 1_V$. Hence, by
monotone convergence,
\[
\nu(V)
=\sup_{\ell\ge1}\int_{K_*}f_\ell\,d\nu
\]
while weak convergence and $0\le f_\ell\le\mathbf 1_V$ give, for every fixed $\ell$,
\[
\int_{K_*}f_\ell\,d\nu
=\lim_{k\to\infty}\int_{K_*}f_\ell\,d\nu_k
\le\liminf_{k\to\infty}\nu_k(V).
\]
Taking the supremum over $\ell$ proves \eqref{eq:open-set-lsc}. If $V=K_*$, then
$\mathcal U(V,q)=\mathcal R(K_*)$, so openness is immediate. Suppose now that
$V\ne K_*$ and $\nu(V)>1-1/q$. Monotone convergence allows us to choose $\ell$ such that
\[
\int_{K_*}f_\ell\,d\nu>1-1/q.
\]
The map $\eta\mapsto\int_{K_*}f_\ell\,d\eta$ is continuous for weak convergence, so
the same strict inequality holds throughout a weak neighborhood of $\nu$. Since
$f_\ell\le\mathbf 1_V$, every measure $\eta$ in that neighborhood satisfies
$\eta(V)>1-1/q$. Thus lower semicontinuity means precisely that the strict superlevel
sets of $\nu\mapsto\nu(V)$ are weakly open. Uniform convergence of the transforms
implies weak convergence, as proved above, so $\mathcal U(V,q)$ is open in
$\mathcal R(K_*)$.

We now prove density. Fix $x\in K_*$ and, for $j\ge1$, let
\[
U_{x,j}=V\cap(x-1/j,x+1/j).
\]
Since $V$ is relatively open and dense in $K_*$, the set $U_{x,j}$ is a nonempty
relatively open subset of $K_*$. The equality $\supp\rho=K_*$ therefore implies
$\rho(U_{x,j})>0$. Define
\[
\rho_{x,j}=\frac{\rho|_{U_{x,j}}}{\rho(U_{x,j})}.
\]
By Lemma~\ref{lem:band}, $\rho_{x,j}$ is a Rajchman probability measure, and it gives
full mass to $V$, meaning that $\rho_{x,j}(V)=1$. Moreover, for every $f\in C(K_*)$,
\[
\abs{\int_{K_*}f\,d\rho_{x,j}-f(x)}
\le
\sup_{\substack{y\in K_*\\ \abs{y-x}<1/j}}
\abs{f(y)-f(x)}
\xrightarrow[j\to\infty]{}0.
\]
Thus $\rho_{x,j}$ converges weakly to $\delta_x$.

Consider a finitely supported probability measure
\[
\pi=\sum_{\ell=1}^{L}a_\ell\delta_{x_\ell},
\qquad
a_\ell\ge0,
\qquad
\sum_{\ell=1}^{L}a_\ell=1,
\]
and define
\[
\pi_j=\sum_{\ell=1}^{L}a_\ell\rho_{x_\ell,j}.
\]
Then $\pi_j(V)=1$ and $\pi_j$ converges weakly to $\pi$. By the definition of the
Fourier--Stieltjes transform
\cite[Chapter~VI, equation~(2.3), p.~144]{Katznelson},
for each fixed $j$ we have
\[
\fhat{\pi_j}(\xi)
=\sum_{\ell=1}^{L}a_\ell\fhat{\rho_{x_\ell,j}}(\xi)
\xrightarrow[\abs\xi\to\infty]{}0.
\]
The sum is finite and each term tends to zero, so $\pi_j$ is Rajchman. This also
explicitly proves that every finite convex combination of Rajchman probability measures
is Rajchman.

Finitely supported probability measures are weakly dense in the set of all probability
measures on $K_*$. Indeed, partition $K_*$ into finitely many Borel sets of diameter at
most $\delta$, choose a point in each nonempty part, and move the mass of that part to
the chosen point. Uniform continuity shows that the integrals of any fixed function in
$C(K_*)$ change by a quantity tending to zero with $\delta$. Thus, given a weak
neighborhood of a probability measure on $K_*$, first choose a finitely supported
probability $\pi$ in that neighborhood and then take $j$ sufficiently large that
$\pi_j$ remains in the same neighborhood. Since $\pi_j(V)=1$, this proves that
Rajchman probability measures $\eta$ satisfying $\eta(V)=1$ are weakly dense in all
probability measures on $K_*$. 

It remains to upgrade this measure-weak density to the uniform-norm density needed for
Baire's theorem. Let
\[
\mathcal A(V)
=\{\fhat\eta\in\mathcal R(K_*):\eta(V)=1\}.
\]
The set $\mathcal A(V)$ is dense in $\mathcal R(K_*)$ for the relative weak topology
inherited from $C_0(\R)$. Indeed, if $\eta_k$ converges weakly to $\eta$ as measures on
$K_*$ and $\Lambda\in C_0(\R)^*=M(\R)$, where the identification follows from
the Riesz representation theorem, then Fubini gives
\[
\int_{\R}\fhat{\eta_k}(\xi)\,d\Lambda(\xi)
=\int_{K_*}
\left(\int_{\R}e^{-2\pi i\xi x}\,d\Lambda(\xi)\right)d\eta_k(x).
\]
The function in parentheses is continuous in $x$ by dominated convergence: $\Lambda$
is finite, the integrand has modulus one, and it converges pointwise when $x$ varies.
Hence the right-hand side converges to the same expression with $\eta$ in place of
$\eta_k$. This is precisely weak convergence of the transforms in $C_0(\R)$. The set
$\mathcal A(V)$ is convex by the calculation for finite convex combinations above.
The Hahn--Banach separation theorem implies that the weak and norm closures of a convex
subset of a Banach space coincide. Thus the weak density just proved shows that
$\mathcal A(V)$ is norm dense in $\mathcal R(K_*)$. Since
$\mathcal A(V)\subseteq\mathcal U(V,q)$, the latter set is also dense, as claimed.

Choose a dense sequence $(x_j)_{j\ge1}$ in $K_*$. For every $m$, choose open intervals
$I_{m,j}\ni x_j$ such that
\[
\operatorname{diam}I_{m,j}<\frac1m,
\qquad
\sum_{j=1}^{\infty}h(\operatorname{diam}I_{m,j})<2^{-m}.
\]
This is possible because $h(r)$ tends to zero as $r$ tends to zero, so the intervals can
be chosen to satisfy both requirements, with
$h(\operatorname{diam}I_{m,j})<2^{-m-j}$. Set
\[
V_m=K_*\cap\bigcup_{j=1}^{\infty}I_{m,j},
\qquad
E=\bigcap_{m=1}^{\infty}V_m.
\]
Each $V_m$ is relatively open and dense in $K_*$. For every $m$, the intervals
$(I_{m,j})_{j\ge1}$ cover $E$, have diameter less than $1/m$, and have total $h$-cost
less than $2^{-m}$. Hence
\[
\mathcal H^h_{1/m}(E)\le2^{-m},
\]
and therefore $\mathcal H^h(E)=0$.
Since $\mathcal R(K_*)$ is a nonempty complete metric space and every
$\mathcal U(V_m,q)$ is open and dense, the Baire category theorem shows that the
intersection
\[
\bigcap_{m\ge1}\bigcap_{q\ge2}\mathcal U(V_m,q)
\]
is nonempty. Let $\nu$ be the Rajchman probability corresponding to one of its elements.
For each fixed $m$, membership in $\mathcal U(V_m,q)$ for every $q\ge2$ gives
$\nu(V_m)>1-1/q$ for every $q$, and hence $\nu(V_m)=1$. Since the countably many sets
$V_m$ all have full $\nu$-measure, $\nu(E)=1$.

The set $E$ need not be compact. Since $E$ is Borel and $\nu(E)=1$, inner regularity
\cite[Theorem~2.18, p.~48]{Rudin} gives a compact set $F\subseteq E$ with $\nu(F)>0$.
Lemma~\ref{lem:band} shows that
\[
\mu=\frac{\nu|_F}{\nu(F)}
\]
is Rajchman. Put $K=\supp\mu$. Since $F$ is compact and $\mu(F)=1$,
\[
K\subseteq F\subseteq E\subseteq K_*.
\]
Consequently, monotonicity gives
\[
\mathcal H^h(K)=0,\qquad
\overdimBlog K\le1,\qquad
\dimPlog K\le1.
\]
Since $\supp\mu=K$,
Proposition~\ref{prop:logbox-lower} and Corollary~\ref{cor:logpacking-lower} give equality
in the last two estimates.

For the final assertion, apply the result just proved to the gauge which, for small
$r$, is given by
\[
g(r)=\frac{1}{\log\log(e^e/r)}
\]
and extend it monotonically away from zero. For every $s>0$,
\[
\frac{h_s(r)}{g(r)}
=\frac{\log\log(e^e/r)}{(\log(1/r))^s}
\xrightarrow[r\searrow0]{}0.
\]
Thus $\mathcal H^g(K)=0$ implies $\mathcal H^{h_s}(K)=0$ for every $s>0$, which is
$\dimHlog K=0$.
\end{proof}

For the set $K$ in the final assertion of Theorem~\ref{thm:thin-support},
\[
\dimHlog K=0<1,
\qquad
\dimPlog K=1<2.
\]
Thus neither hypothesis of the Shmerkin--Yavicoli theorem
(Theorem~\ref{thm:shmerkin-yavicoli}) applies to $K$, whereas our main result,
Theorem~\ref{thm:main}, does. The packing value $1$ is optimal among compact sets
supporting Rajchman measures by Corollary~\ref{cor:logpacking-lower}.

\section*{Acknowledgments}
A.~I. and A.~Y. thank \'Akos Magyar, the Erd\H{o}s Center, and the R\'enyi Institute for
their hospitality when this paper was written.

The authors used ChatGPT to assist with the organization and exposition of the manuscript. All mathematical statements, proofs, references, and final text were reviewed and verified by the authors, who take full responsibility for the content.



\begin{thebibliography}{99}

\bibitem{Bluhm}
C.~E.~Bluhm, \emph{Liouville numbers, Rajchman measures, and small Cantor sets},
Proc. Amer. Math. Soc. \textbf{128} (2000), no.~9, 2637--2640.

\bibitem{Bourgain}
J.~Bourgain, \emph{Construction of sets of positive measure not containing an
affine image of a given infinite structure},
Israel J. Math. \textbf{60} (1987), no.~3, 333--344.

\bibitem{BradfordKohutMooroogen}
L.~Bradford, H.~Kohut, and Y.~Mooroogen,
\emph{Large subsets of Euclidean space avoiding infinite arithmetic progressions},
Proc. Amer. Math. Soc. \textbf{151} (2023), no.~8, 3535--3545.

\bibitem{CruzLaiPramanik}
A.~D.~Cruz, C.-K.~Lai, and M.~Pramanik,
\emph{Large sets avoiding affine copies of infinite sequences},
Real Anal. Exchange \textbf{48} (2023), no.~2, 251--270.

\bibitem{Eigen}
S.~J.~Eigen, \emph{Putting convergent sequences into measurable sets},
Studia Sci. Math. Hungar. \textbf{20} (1985), no.~1--4, 411--412.

\bibitem{Erdos}
P.~Erd\H{o}s, \emph{Problems}, Math. Balkanica \textbf{4} (1974), 203--204.

\bibitem{Falconer}
K.~J.~Falconer, \emph{On a problem of Erd\H{o}s on sequences and measurable sets},
Proc. Amer. Math. Soc. \textbf{90} (1984), no.~1, 77--78.

\bibitem{FengLaiXiong}
D.-J.~Feng, C.-K.~Lai, and Y.~Xiong,
\emph{Erd\H{o}s similarity problem via bi-Lipschitz embedding},
Int. Math. Res. Not. IMRN \textbf{2024}, no.~17, 12327--12342.

\bibitem{GaoMooroogenYip}
X.~Gao, Y.~Mooroogen, and C.~H.~Yip,
\emph{On an Erd\H{o}s similarity problem in the large},
Bull. Lond. Math. Soc. \textbf{57} (2025), no.~6, 1801--1818.

\bibitem{Hoeffding}
W.~Hoeffding, \emph{Probability inequalities for sums of bounded random variables},
J. Amer. Statist. Assoc. \textbf{58} (1963), no.~301, 13--30.

\bibitem{IosevichYavicoliFalconer}
A.~Iosevich and A.~Yavicoli,
\emph{Falconer lattice sets and the Erd\H{o}s similarity problem},
arXiv:2604.01493, 2026.

\bibitem{IvashevMusatov}
O.~S.~Iva\v{s}ev-Musatov, \emph{$M$-sets and Hausdorff measure},
Dokl. Akad. Nauk SSSR \textbf{142} (1962), no.~5, 1001--1004 (Russian).

\bibitem{JLM}
Y.~Jung, C.-K.~Lai, and Y.~Mooroogen,
\emph{Fifty years of the Erd\H{o}s similarity conjecture},
Res. Math. Sci. \textbf{12} (2025), Paper No.~9; correction, ibid., Paper No.~50.

\bibitem{Katznelson}
Y.~Katznelson, \emph{An introduction to harmonic analysis}, 3rd ed.,
Cambridge Mathematical Library, Cambridge University Press, Cambridge, 2004.

\bibitem{Kolountzakis}
M.~N.~Kolountzakis, \emph{Infinite patterns that can be avoided by measure},
Bull. London Math. Soc. \textbf{29} (1997), no.~4, 415--424.

\bibitem{KolountzakisCantor}
M.~N.~Kolountzakis, \emph{Sets of full measure avoiding Cantor sets},
Bull. Hellenic Math. Soc. \textbf{67} (2023), 1--11; arXiv:2209.10823.

\bibitem{KolountzakisPapageorgiou}
M.~N.~Kolountzakis and E.~Papageorgiou,
\emph{Large sets containing no copies of a given infinite sequence},
Anal. PDE \textbf{18} (2025), no.~1, 93--108.

\bibitem{KornerIvashev}
T.~W.~K\"orner, \emph{On the theorem of Iva\v{s}ev-Musatov III},
Proc. London Math. Soc. (3) \textbf{53} (1986), no.~1, 143--192.

\bibitem{LiSahlsten}
J.~Li and T.~Sahlsten, \emph{Trigonometric series and self-similar sets},
J. Eur. Math. Soc. \textbf{24} (2022), no.~1, 341--368.

\bibitem{Lyons}
R.~Lyons, \emph{Seventy years of Rajchman measures},
J. Fourier Anal. Appl. Special Issue (1995), 363--377.

\bibitem{MoraEtAlSumsets}
N.~Mora Cu\'ellar, A.~Iosevich, N.~Kulkarni, I.~Rojas Aravena, and A.~Yavicoli,
\emph{The Erd\H{o}s similarity conjecture for two-fold sumsets with a geometric summand},
arXiv:2607.03584, 2026.

\bibitem{Rudin}
W.~Rudin, \emph{Real and complex analysis}, 3rd ed.,
McGraw--Hill Book Co., New York, 1987.

\bibitem{Salem}
R.~Salem, \emph{Algebraic numbers and Fourier analysis},
D.~C. Heath and Company, Boston, MA, 1963.

\bibitem{Schmidt}
W.~M.~Schmidt, \emph{Diophantine approximation},
Lecture Notes in Mathematics, vol.~785, Springer-Verlag, Berlin, 1980.

\bibitem{ShmerkinYavicoli}
P.~Shmerkin and A.~Yavicoli, \emph{Full measure universality for Cantor sets},
Adv. Math. \textbf{495} (2026), Paper No.~110978.

\bibitem{Svetic}
R.~E.~Svetic, \emph{The Erd\H{o}s similarity problem: a survey},
Real Anal. Exchange \textbf{26} (2000/01), no.~2, 525--539.

\bibitem{Wiener}
N.~Wiener, \emph{The Fourier integral and certain of its applications},
Cambridge University Press, Cambridge, 1933.

\end{thebibliography}
\end{document}